\def\R{{\mathbb R}}
\def\T{{\mathbb T}}
\def\C{{\mathbb C}}
\def\N{{\mathbb N}}
\def\Z{{\mathbb Z}}
\def\CS{{\mathcal S}}
\def\H{{\mathcal H}}
\theoremstyle{plain}
\newtheorem{theorem}{Theorem}[section]
\newtheorem{corollary}[theorem]{Corollary}
\newtheorem{example}[theorem]{Example}
\newtheorem{proposition}[theorem]{Proposition}
\newtheorem{lemma}[theorem]{Lemma}
\theoremstyle{definition}
\newtheorem{definition}[theorem]{Definition}
\theoremstyle{remark}
\newtheorem{remark}[theorem]{Remark}
\begin{document}

\title[Measures on solenoids]
{Probability measures on solenoids corresponding to fractal wavelets}

\author[L.W. Baggett]{Lawrence~W.~Baggett}
\address{Department of Mathematics, Campus Box 395, University of Colorado, Boulder, CO, 80309-0395}
\email{baggett@colorado.edu}
\author[K. D. Merrill]{Kathy~D.~Merrill}
\address{Department of Mathematics, Colorado College, Colorado Springs, CO}
\email{kmerrill@coloradocollege.edu}
\author[J. A. Packer]{Judith~A.~Packer}
\address{Department of Mathematics, Campus Box 395, University of Colorado, Boulder, CO, 80309-0395}
\email{packer@colorado.edu}
\thanks{Research supported in part by a grant from the National Science Foundation DMS--0701913}
\author[A. B. Ramsay]{Arlan~B.~Ramsay}
\address{Department of Mathematics, Campus Box 395, University of Colorado, Boulder, CO, 80309-0395}
\email{ramsay@colorado.edu}

\subjclass{Primary 42C40; Secondary 22D25}

\keywords{fractals, wavelets, solenoids, probability measures}
\maketitle 

\begin{abstract}
The measure on generalized solenoids constructed using filters by Dutkay and Jorgensen in \cite{dutjor}  is analyzed further by writing the solenoid as the product of a torus and a Cantor set.
 Using this decomposition, key differences are revealed between solenoid measures associated with classical filters in $\mathbb R^d$ and those associated with filters on inflated fractal sets.  In particular, it is shown that the classical case produces atomic fiber measures, and as a result supports both suitably defined solenoid MSF wavelets and systems of imprimitivity for the corresponding wavelet representation of the generalized Baumslag-Solitar group.  In contrast, the fiber measures for filters on inflated fractal spaces cannot be atomic, and thus can support neither MSF wavelets nor systems of imprimitivity.  
\end{abstract}

\section{Introduction}
Let $d$ be a positive integer and $A$ be a diagonal $d\times d$ matrix whose diagonal entries $N_1,N_2,\cdots, N_d$ are integers greater than 1.  We write ${\bf N}=(N_1,N_2,\cdots,N_d),$ and define $\beta$ to be  the induced map on $\mathbb T^d$ given by $\beta(z)=(z_1^{N_1},z_2^{N_2},\cdots,z_d^{N_d})$.   In this context, the generalized solenoid $\CS_{\beta}$ is the inverse limit of $\mathbb T^d$ under the map $\beta$.  Methods of constructing probability measures on generalized solenoids via filter functions were first explored by Dutkay and Jorgensen in \cite{dutjor} and worked out explicitly for the solenoid given by $d=1, \; N_1=2$ and the filter $m(z)=\frac{1+z^2}{\sqrt 2}$ corresponding to the inflated Cantor set wavelet by Dutkay in \cite{dut}.  In a recent paper, Baggett, Larsen, Packer, Raeburn, and Ramsay used a slightly different construction to obtain probability measures on solenoids from more general filters on $\mathbb T^d$ associated to integer dilation matrices \cite{aijkln}.   The filters used in all of these papers, though not necessarily low-pass in the classical sense of the term \cite{mal},  are required to be non-zero except on a set of measure $0$, and bounded away from $0$ in a neighborhood of the origin.  

In Section \ref{measuredecomp}, we will describe in detail the construction given in \cite{aijkln} of the probability measure $\tau$ on $\CS_{\beta}$.  Using the discussion in Chapters 3 and 4 of P. Jorgensen's book \cite{Jor2}, we then examine the decomposition of $\tau$ over the $d$-torus $\mathbb T^d,$ with fiber measures on the Cantor set.   We show that for the measure on the solenoid built from classical filters in $\mathbb R^d,$  the fiber measures are atomic, while for measures built from filters on inflated fractal sets, the fiber measures have no atoms.  
 
In the remaining two sections, we explore the consequences of this distinction between the classical and fractal cases.  First, in Section \ref{MSF}, we define the notion of a solenoid MSF (minimally supported frequency) wavelet using the Hilbert space $L^2(\CS_{\beta},\tau).$  This definition can be applied to both classical filters and filters on inflated fractals.  Thus we are able to extend the concept of an MSF wavelet to the case of inflated fractal sets, where the standard Fourier transform is not available.  The definition on the solenoid also allows us to compare the classical and fractal cases.  We show that  solenoid MSF wavelets exist if and only if the fiber measures on the Cantor set are almost everywhere atomic.  Thus the difference between the nature of the fiber measures in the classical and fractal cases causes a difference in the existence of solenoid MSF wavelets.  

Section \ref{BS} examines a further implication for the related representation of the generalized Baumslag-Solitar group  on $L^2(\CS_{\beta}, \tau).$    Let ${\mathbb Q_A}=\cup_{j=0}^{\infty}A^{-j}(\mathbb Z^d)\subset \mathbb
Q^d$ represent the $A$-adic rationals in $\mathbb R^d$.  The 
generalized Baumslag-Solitar group $BS_{A}$ is a semidirect product, with
elements in ${\mathbb Q_{A}}\times \mathbb Z$
and with group operation given by 
$$(q_1,m_1)\cdot (q_2,m_2)\;=\;
(q_1+A^{-m_1}(q_2),m_1+m_2),$$
$q_1,q_2\in{\mathbb Q_A},\; m_1,\;m_2\in\mathbb Z.$
In Section \ref{BS}, we show that if $\psi\in L^2(\CS_{\beta}, \tau)$ corresponds to a single wavelet for dilation and translation, then $\psi$ is a solenoid MSF wavelet if and only if the wavelet subspaces $\{W_j: j\in\mathbb Z\}$ for $\psi$ form a system of imprimitivity for this representation, so that it is induced in the sense of Mackey from a representation of the discrete subgroup of $A$-adic rationals $\mathbb Q_{A}.$  Thus a single wavelet on the solenoid is induced from the $A$-adic rationals if and only if the fiber measures are atomic.   In the course of obtaining this result, we generalize a theorem of E. Weber \cite{web} relating ordinary MSF wavelets in $L^2(\mathbb R)$ to wavelet subspaces that are invariant under relevant translation operators.    

The upshot of our study is to draw striking distinctions between solenoids associated with classical and fractal filters.  The key difference is that the multiresolution analyses coming from the standard inflated fractal sets will not carry solenoid MSF wavelets in the sense defined here.  This is not so surprising in light of recent work of Dutkay, D. Larson, and S. Silvestrov \cite{dls}.  In contrast, classical multiresolution analyses carry solenoid MSF wavelets as well as standard Fourier MSF wavelets.   In the classical case, the existence of MSF wavelets was an essential tool in proving  the existence of single wavelets in $L^2(\mathbb R^d)$. It is not yet known whether or not the inflated fractal Hilbert spaces and associated dilation and translation operators support a single wavelet.  Another consequence of the existence of MSF wavelets in the classical case was the fact that the representation of the associated Baumslag-Solitar group is induced from the discrete subgroup $\mathbb Q_{A}.$  We have shown in Section \ref{BS} that the connection between the existence of MSF wavelets and systems of imprimitivity for representations of Baumslag-Solitar carries over to the solenoid spaces.  Thus, because we have solenoid MSF wavelets for solenoids built from classical filters, the representation of Baumslag Solitar on $L^2(\CS_{\beta}, \tau)$ can also be seen to be induced from $\mathbb Q_{A}$ in the classical filter case.  However, for filters on inflated fractals, the representations on the associated solenoid space cannot be induced from the $A$-adic rationals.  


\section{Construction of probability measures on solenoids from filter functions and their direct integral decompositions} 
\label{measuredecomp}

Our context will be as follows:  Let $X\subset \mathbb R^d,$ and suppose $X$ is invariant under under translations by
$\{v: v\in \mathbb Z^d\}$ and under 
multiplication by the expansive integer diagonal matrix $A$ with $a_{k,k}=N_k.$  Define $N\equiv\det A=N_1N_2\cdots N_d.$  Let $\mu$ be a $\sigma$-finite Borel measure on $X$ that is invariant under integer translations.  Suppose
there is a positive constant $K\leq N$ such that $\mu(A(S))=K\mu(S)$ for $S$ a Borel
subset of $X.$  
Define dilation and translation operators $D$ and $\{T_{v}: v\in
\mathbb Z^d\}$ on $L^2(X,\mu)$ by 
$$D(f)(x)=\sqrt{K}f(Ax),\;$$
$$T_{v}(f)(x)=f(x-v),\;f\;\in\;L^2(X,\mu).$$
A calculation shows that $T_{v}D\;=\;DT_{Av}.$

In this paper we will consider two classes of such spaces.  In the classical examples, we have $X=\mathbb R^d$ with Lebesgue measure $\mu$. In the other class of examples, $X$ is built from a fractal $\mathcal F\subset \mathbb R^d$ satisfying  $\mathcal F=\cup_{i=1}^K (A^{-1}\mathcal F+v^i)$ for fixed elements  $v^1,\cdots ,v^K\in\mathbb R^d$.  We assume that this system satisfies the separation requirement given by the open set condition \cite{hutch}.  Following \cite{dutjor42},  $X$ is then defined to be an inflated fractal $$\mathcal R=\cup_{j\in\mathbb Z}\cup_{v\in\mathbb Z^d}A^{-j}(\mathcal F+v),$$ and $\mu$ is defined to be Hausdorff measure of dimension $\log_N(K)$ restricted to $\mathcal R$, which coincides with the Hutchinson measure \cite{hutch}.  It will be useful to describe the classical example in the same terms as the fractal example by taking $\mathcal F$ to be $[0,1]^d,$ $K=N,$ and $\{v^1,v^2,\cdots,v^K\}=\{(\frac{j_1}{N_1},\frac{j_2}{N_2}\cdots\frac{j_d}{N_d}:0\leq j_i<N_i\}.$

In both classes of examples there is a natural multiresolution (MRA) structure on $L^2(X,\mu)$, which can be described as follows.  We define a scaling function  $\phi=\chi_{\mathcal F}$.   Translates of $\phi$ are orthonormal, and we define the core subspace $V_0$ of the MRA to be the closure of their span.  Setting $V_j=D^j(V_0)$ it can be shown (see \cite{aijkln}) that $\cup_{j\in \mathbb Z}V_j$ is dense in $L^2(X,\mu)$ and
$\cap_{j\in\mathbb Z} V_j=\{0\}.$  The inclusion $V_j\subset V_{j+1}$ follows from the fact
that 
\begin{equation}
\label{refine}
\phi = \frac1{\sqrt K}\sum_{i=1}^K DT_{v^i}(\phi).
\end{equation}  
If $z=(z_1,\cdots,z_d)\in \mathbb T^d$ and
$v=\;(v_1,\cdots,v_d)\in\mathbb Z^d,$  we will use the notation $z^v\equiv z_1^{v_1}\cdot z_2^{v_2}\cdots \cdot z_d^{v_d}$.  With this notation, the refinement equation (\ref{refine}) above gives a low pass filter for dilation by $A$
defined by $m(z)=\sum_{i=1}^K\frac1{\sqrt K}z^{v^i},$ 
$z\in\mathbb T^d.$   This filter will satisfy
$$\sum_{\{w: \beta(w)=z\}}|m(w)|^2=N,\; z\in \mathbb T^d.$$

We now recall the definition of the probability measure on the generalized solenoid $\CS_{\beta}$ that was developed in \cite{dutjor}, \cite{dut}, and \cite{aijkln}.   
As above, we write $\beta(z)=(z_1^{N_1},z_2^{N_2},\cdots,z_d^{N_d})$. 
 \begin{proposition}\label{tauexists} (Proposition 6.2 of \cite{aijkln})
Denote by $\pi_n$ the canonical map of $\CS_{\beta}$ onto the $n$th copy of $\mathbb T^d$. Let $m:\mathbb T^d\to\C$ be a Borel function such that the inverse image $m^{-1}(\{0\})$ has Haar measure equal to $0$ that in addition satisfies 
$$\sum_{\{w: \beta(w)=z\}}|m(w)|^2=N,\;a.e.\; z\in \mathbb T^d.$$ 
Then there is a unique probability measure
 $\tau$ on $\CS_{\beta}$ such that for every $f\in C(\mathbb T^d)$,
\begin{equation}\label{easierdef}
\int_{\CS_{\beta}} (f\circ\pi_n)\,d\tau=\int_{\mathbb T^d} f(z)\big(\textstyle{\prod_{j=0}^{n-1}|m(\beta^j(z))|^2}\big)\,dz.
\end{equation}
\end{proposition}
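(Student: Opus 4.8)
The plan is to construct $\tau$ from a positive linear functional on $C(\CS_\beta)$ via the Riesz representation theorem, exploiting that $\CS_\beta$, being an inverse limit of the compact spaces $\mathbb{T}^d$, is itself compact, and that the functions pulled back from the coordinate tori form a dense subalgebra. First I would record the two structural facts that drive everything: since each bonding map is $\beta$, the canonical projections satisfy $\pi_n = \beta\circ\pi_{n+1}$, and since $\beta$ is a surjection between compact spaces each $\pi_n$ is itself surjective onto $\mathbb{T}^d$. Setting $\mathcal{A}_n=\{f\circ\pi_n:f\in C(\mathbb{T}^d)\}$, the relation $\pi_n=\beta\circ\pi_{n+1}$ gives $\mathcal{A}_n\subseteq\mathcal{A}_{n+1}$, so $\mathcal{A}=\bigcup_n\mathcal{A}_n$ is a unital, conjugation-closed subalgebra of $C(\CS_\beta)$. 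Because the $\pi_n$ jointly separate the points of the inverse limit, $\mathcal{A}$ separates points, and the Stone--Weierstrass theorem makes $\mathcal{A}$ dense in $C(\CS_\beta)$.

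Next I would define a candidate functional $L$ on $\mathcal{A}$ by declaring $L(f\circ\pi_n)$ to be the right-hand side of (\ref{easierdef}) and then verify it is well defined. By surjectivity of the projections, if $f\circ\pi_n=g\circ\pi_{n'}$ with $n\le n'$ then $g=f\circ\beta^{n'-n}$ on all of $\mathbb{T}^d$, so well-definedness reduces to applying the single consistency identity
\[
\int_{\mathbb{T}^d} f(z)\,\prod_{j=0}^{n-1}|m(\beta^j(z))|^2\,dz
=\int_{\mathbb{T}^d} (f\circ\beta)(z)\,\prod_{j=0}^{n}|m(\beta^j(z))|^2\,dz
\]
a total of $n'-n$ times. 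This identity is the crux of the whole proposition. To prove it I would invoke the fibre-integration formula for the Haar-preserving endomorphism $\beta$ of $\mathbb{T}^d$, namely $\int_{\mathbb{T}^d}(G\circ\beta)\,h\,dz=\int_{\mathbb{T}^d}G(w)\big(\frac{1}{N}\sum_{\beta(w')=w}h(w')\big)\,dw$, the factor $1/N$ reflecting that the fibre has $N$ points under normalized Haar measure. Taking $G(w)=f(w)\prod_{j=0}^{n-1}|m(\beta^j(w))|^2$ and $h=|m|^2$, and using $\beta^j(\beta(z))=\beta^{j+1}(z)$ to match the product on the right-hand side, the inner fibre sum becomes $\frac{1}{N}\sum_{\beta(w')=w}|m(w')|^2$, which the hypothesis $\sum_{\beta(w')=w}|m(w')|^2=N$ collapses to $1$. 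This is exactly where the quadrature condition on $m$ is used, and it is the step I expect to require the most care.

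With $L$ well defined, the remaining points are routine. Positivity is immediate: if $f\ge 0$ then $f\circ\pi_n\ge 0$ and the right-hand integrand in (\ref{easierdef}) is nonnegative, since $|m|^2\ge 0$. Taking $f\equiv 1$ and $n=0$, with the empty product equal to $1$, gives $L(1)=1$. A positive unital functional automatically satisfies $|L(g)|\le\|g\|_\infty$ for $g\in\mathcal{A}$, so $L$ extends uniquely by continuity and density to a positive normalized functional on all of $C(\CS_\beta)$. The Riesz representation theorem then yields a unique regular Borel probability measure $\tau$ with $L(g)=\int_{\CS_\beta}g\,d\tau$, and by construction $\tau$ satisfies (\ref{easierdef}).

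Finally, uniqueness of $\tau$ among probability measures satisfying (\ref{easierdef}) follows because any such measure must assign the same value as $\tau$ to every element of $\mathcal{A}$, and since $\mathcal{A}$ is dense in $C(\CS_\beta)$ the two measures induce the same functional and hence coincide. The only genuinely substantive step is the consistency identity of the second paragraph; once that is in hand, the Stone--Weierstrass and Riesz machinery assembles the result mechanically.
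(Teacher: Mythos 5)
The paper itself contains no proof of this proposition: it is imported verbatim as Proposition 6.2 of \cite{aijkln}, so there is no internal argument to compare against. Your proof is correct, and it is the standard construction that the cited source relies on: the one substantive step is exactly the consistency identity you isolate, which follows from the fibre-summation formula for the $N$-to-one, Haar-measure-preserving map $\beta$ together with the filter condition $\tfrac1N\sum_{\beta(w)=z}|m(w)|^2=1$ a.e. (the same device the paper uses later, in the proof of Proposition \ref{quasiinvariant}, to pass to the alternate form of the definition of $\tau$), after which the nested algebras $\mathcal{A}_n$, Stone--Weierstrass, and the Riesz representation theorem assemble the measure and give uniqueness.
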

The inverse limit group $\CS_{\beta}$ carries a natural group automorphism induced by the shift, which is commonly studied in topological dynamics:

\begin{corollary}
\label{solenoidauto}
Let $\CS_{\beta}\;=\{(z_n)_{n=0}^{\infty}:\;z_n\in \mathbb T^d,\forall n,\;\beta(z_{n+1})=z_n,\;\forall n\}$ be the inverse limit space described in Proposition \ref{tauexists}.
Define $\sigma:\CS_{\beta}\mapsto \CS_{\beta}$ by $\sigma((z_n)_{n=0}^{\infty})\;=\;(\zeta_n)_{n=0}^{\infty},\;\zeta_n\;=z_{n+1},\;n\; \geq 1.$  Then $\sigma$ is a group automorphism of $\CS_{\beta}$ with inverse given by 
$\sigma^{-1}((w_n)_{n=0}^{\infty})=(z_n)_{n=0}^{\infty},$ where $z_0=\beta(w_0),$ and $z_n\;=\;w_{n-1},\;n\geq 1.$
\end{corollary}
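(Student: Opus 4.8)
The plan is to verify directly that $\sigma$ is a continuous bijective group homomorphism whose inverse is the stated map, the entire argument being bookkeeping with the defining relation $\beta(z_{n+1})=z_n$. I would begin by recording the group structure: since $\beta(zw)=\beta(z)\beta(w)$, the map $\beta$ is a continuous endomorphism of the compact abelian group $\mathbb T^d$, so the inverse limit $\CS_\beta$ is a closed subgroup of the product $\prod_{n=0}^\infty \mathbb T^d$, equipped with coordinatewise multiplication and the relative product topology. This makes coordinate projections and coordinate shifts automatically continuous, which disposes of the topological part of ``automorphism'' once bijectivity is in hand.

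First I would check that $\sigma$ is well defined, i.e.\ that $\sigma((z_n))=(\zeta_n)$ with $\zeta_n=z_{n+1}$ lands back in $\CS_\beta$: the required relation $\beta(\zeta_{n+1})=\zeta_n$ reads $\beta(z_{n+2})=z_{n+1}$, which is exactly the defining relation for $(z_n)$ shifted by one index. The homomorphism property is then immediate, since coordinatewise multiplication commutes with the shift: $\sigma((z_n)(w_n))=\sigma((z_nw_n))=(z_{n+1}w_{n+1})=\sigma((z_n))\,\sigma((w_n))$.

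Next I would verify that the stated map $\rho((w_n))=(z_n)$, with $z_0=\beta(w_0)$ and $z_n=w_{n-1}$ for $n\geq 1$, is well defined into $\CS_\beta$ and is a two-sided inverse for $\sigma$. For well-definedness one checks $\beta(z_{n+1})=z_n$ in two cases: for $n=0$ it reads $\beta(w_0)=\beta(w_0)$, and for $n\geq 1$ it reads $\beta(w_n)=w_{n-1}$, which holds because $(w_n)\in\CS_\beta$. Composing the maps, $\sigma\circ\rho$ simply discards the prepended entry $\beta(w_0)$ and returns $(w_n)$, while $\rho\circ\sigma$ recovers the deleted zeroth coordinate precisely because $z_0=\beta(z_1)$ by the defining relation, so $\rho\circ\sigma=\mathrm{id}$ as well. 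The existence of this continuous two-sided inverse shows $\sigma$ is a bijection, hence a topological group automorphism.

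The only place demanding care, and the single point worth emphasizing in the write-up, is the asymmetry in the inverse: the forward shift merely forgets the zeroth coordinate, so to undo it one must reconstruct that coordinate by applying $\beta$ to the new zeroth entry. This is why $z_0=\beta(w_0)$ appears, and confirming that this choice is both forced and consistent with the inverse-limit relation is the crux; everything else is routine index chasing.
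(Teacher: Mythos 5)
Your proposal is correct and follows the same route as the paper, whose proof simply asserts that the homomorphism property and the formula for $\sigma^{-1}$ are ``an easy calculation''; you have carried out exactly that calculation, spelling out well-definedness, the two-sided inverse, and continuity. No discrepancies to report.
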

\begin{proof}
It is an easy calculation that $\sigma$ is a group homomorphism, and that its inverse is given by the formula above for $\sigma^{-1}.$  Thus $\sigma$ is a group automorphism, as desired.
\end{proof}

\begin{proposition}
\label{quasiinvariant}
The measure $\tau$ is quasi-invariant with respect to $\sigma.$
that is, $\tau(E) = 0$ if and only if $\tau(\sigma(E)) = 0 $ if and only if $\tau(\sigma^{-1}(E)) = 0.$
The Radon-Nikodym derivatives are given as follows:
\[
\frac{d\tau\circ\sigma}{d\tau}(\eta) = \frac1{|m(\beta(\pi_0(\eta))|^2},
\]
and
\[
\frac{d\tau\circ\sigma^{-1}}{d\tau}(\eta)  = |m(\pi_0(\eta))|^2.
\]
\end{proposition}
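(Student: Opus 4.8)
The plan is to verify the two Radon--Nikodym formulas by testing each asserted identity of measures against a class of functions dense in $C(\CS_\beta)$, and then to read off quasi-invariance from the strict positivity of the resulting densities. The natural test functions are the cylinder functions $f\circ\pi_n$ with $f\in C(\T^d)$ and $n\ge 0$. These separate points of the inverse limit and include the constants, so the $*$-algebra they generate is dense by Stone--Weierstrass; moreover, since $\pi_{n'}=\beta^{\,n-n'}\circ\pi_n$ for $n'\le n$, the product of two cylinder functions collapses to a single cylinder function at the higher level, so it suffices to check each identity on functions $f\circ\pi_n$. Before computing I would record the commutation relations that drive everything: $\pi_n\circ\sigma=\pi_{n+1}$, $\pi_n\circ\sigma^{-1}=\pi_{n-1}$ for $n\ge 1$, $\pi_0\circ\sigma^{-1}=\beta\circ\pi_0$, and $\pi_0=\beta^{\,n}\circ\pi_n$, all immediate from Corollary \ref{solenoidauto} and the definition of $\CS_\beta$.

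First I would treat $\tau\circ\sigma^{-1}$, which is the cleaner of the two. Writing this as the pushforward $\sigma_*\tau$, so that $\int g\,d(\tau\circ\sigma^{-1})=\int (g\circ\sigma)\,d\tau$, and taking $g=f\circ\pi_n$, the relation $\pi_n\circ\sigma=\pi_{n+1}$ turns the left side into $\int (f\circ\pi_{n+1})\,d\tau$, which by (\ref{easierdef}) equals $\int_{\T^d} f(z)\prod_{j=0}^{n}|m(\beta^j(z))|^2\,dz$. On the other side I would use $\pi_0=\beta^{\,n}\circ\pi_n$ to write $(f\circ\pi_n)\cdot|m\circ\pi_0|^2=(f\cdot|m\circ\beta^{\,n}|^2)\circ\pi_n$, and apply (\ref{easierdef}) again to get $\int_{\T^d} f(z)|m(\beta^n(z))|^2\prod_{j=0}^{n-1}|m(\beta^j(z))|^2\,dz$. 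The two expressions coincide because the extra factor merely extends the telescoping product from $n-1$ to $n$; this establishes $\frac{d\tau\circ\sigma^{-1}}{d\tau}=|m\circ\pi_0|^2$.

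The formula for $\tau\circ\sigma$ I would then deduce from the reciprocal cocycle identity $\frac{d\tau\circ\sigma}{d\tau}=(\frac{d\tau\circ\sigma^{-1}}{d\tau}\circ\sigma)^{-1}$, valid for any invertible measurable map and provable by substituting $h\mapsto h\circ\sigma^{-1}$ into the change-of-variables identity $\int(h\circ\sigma)\,d\tau=\int h\,|m\circ\pi_0|^2\,d\tau$ and solving for $\int(h\circ\sigma^{-1})\,d\tau$. Composing $|m\circ\pi_0|^2$ with $\sigma$ and using $\pi_0\circ\sigma=\pi_1$ gives the density $1/|m(\pi_0(\sigma(\eta)))|^2$, which is exactly the first Radon--Nikodym derivative (the relation $\beta\circ\pi_1=\pi_0$ expresses it through the level-$0$ coordinate as in the statement). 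As an independent check, repeating the cylinder-function computation with $\pi_n\circ\sigma^{-1}=\pi_{n-1}$ reproduces the same derivative by the identical telescoping. The one genuinely substantive point, and the step I expect to be the main obstacle, is the passage from ``equality on each cylinder level $n$'' to ``equality of measures'': one must observe that the single function $|m\circ\pi_0|^2$ (respectively its reciprocal along the orbit) simultaneously produces the correct level-$n$ integral for every $n$ at once, which is precisely what the telescoping guarantees, and then invoke the Riesz representation theorem to upgrade agreement on the dense subalgebra to equality of finite Borel measures.

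Finally, quasi-invariance does not follow from absolute continuity alone; it requires both derivatives to be finite and strictly positive $\tau$-almost everywhere, and this is exactly where the hypothesis of Proposition \ref{tauexists} that $m^{-1}(\{0\})$ be Haar-null enters. I would note that (\ref{easierdef}) exhibits $(\pi_n)_*\tau$ as absolutely continuous with respect to Haar measure on $\T^d$, so the Haar-null set $\{m=0\}$ is null for each $(\pi_n)_*\tau$; consequently $|m\circ\pi_n|^2>0$ holds $\tau$-a.e., the reciprocal densities are finite, and both $\tau\circ\sigma\ll\tau$ and $\tau\circ\sigma^{-1}\ll\tau$ have everywhere-positive densities. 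This yields the chain of equivalences $\tau(E)=0\iff\tau(\sigma(E))=0\iff\tau(\sigma^{-1}(E))=0$ asserted in the Proposition.
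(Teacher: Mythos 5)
Your argument is correct in substance, and its computational core takes a genuinely different---and cleaner---route than the paper's. The paper fixes a cylinder function $f\circ\pi_n$, rewrites everything through the ``alternate form'' of the definition of $\tau$ as preimage sums $\frac{1}{N^n}\sum_{\beta^n(w)=z}(\cdots)$, and runs a reindexing argument involving an average over $\{x:\beta(x)=1\}$; the companion identity for $\tau\circ\sigma$ is then dismissed as ``proved analogously.'' You instead use $\pi_n\circ\sigma=\pi_{n+1}$ so that both sides of the identity for $\tau\circ\sigma^{-1}$ become literal instances of (\ref{easierdef}), equality holding by the telescoping $\prod_{j=0}^{n}|m(\beta^j(z))|^2=|m(\beta^n(z))|^2\prod_{j=0}^{n-1}|m(\beta^j(z))|^2$, and you obtain the second derivative from the first by the substitution/cocycle identity rather than by a second computation. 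You also make explicit two points the paper glosses over: why cylinder functions suffice (Stone--Weierstrass plus Riesz, using that products of cylinder functions collapse to a single higher-level one), and why the two-sided ``if and only if'' of quasi-invariance requires the densities to be finite and strictly positive $\tau$-a.e., which is exactly where Haar-nullity of $m^{-1}(\{0\})$ and absolute continuity of $(\pi_n)_*\tau$ enter. (One small point you share with the paper: (\ref{easierdef}) is stated for continuous $f$ but gets applied to Borel integrands such as $f\cdot|m\circ\beta^n|^2$; this is harmless, since (\ref{easierdef}) identifies two finite Borel measures on $\T^d$ and therefore extends to bounded Borel functions.)

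The one step that does not survive scrutiny is your parenthetical reconciling your density for $\tau\circ\sigma$ with the formula printed in the Proposition. You correctly derive $\frac{d\tau\circ\sigma}{d\tau}(\eta)=1/|m(\pi_0(\sigma(\eta)))|^2=1/|m(\pi_1(\eta))|^2$, but the relation $\beta\circ\pi_1=\pi_0$ does not turn this into $1/|m(\beta(\pi_0(\eta)))|^2$: it gives $m(\beta(\pi_1(\eta)))=m(\pi_0(\eta))$, whereas $\beta(\pi_0(\eta))=\beta^2(\pi_1(\eta))$ is a different point of $\T^d$, so the two candidate densities are genuinely different functions. In fact it is your formula, not the printed one, that is correct: testing the first display of the paper's proof with $f=h\circ\pi_0$ and using (\ref{easierdef}) at level $1$, your density gives $\int_{\T^d}h(z)|m(z)|^2|m(z)|^{-2}\,dz=\int_{\T^d}h(z)\,dz$ as required, while the printed density gives $\int_{\T^d}h(z)|m(z)|^2|m(\beta^2(z))|^{-2}\,dz$, which is not $\int_{\T^d}h(z)\,dz$ in general; indeed for the Haar filter $m(z)=(1+z)/\sqrt{2}$ the printed function is not even $\tau$-integrable, although the Radon--Nikodym derivative of the probability measure $\tau\circ\sigma$ must integrate to $1$. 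So rather than asserting agreement with the statement, you should record that $|m(\beta(\pi_0(\eta)))|^2$ in the Proposition is an error for $|m(\pi_0(\sigma(\eta)))|^2$; with that correction, your proof is complete.
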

\begin{proof}
It suffices to show that
\[
\int_{\CS_\beta} f(\sigma(\eta))\frac1{|m(\beta(\pi_0(\eta)))|^2}\, d\tau(\eta) = \int_{\CS_\beta} f(\eta)\, d\tau(\eta)
\]
and
\[
\int_{\CS_\beta} f(\sigma^{-1}(\eta)) |m(\pi_0(\eta))|^2\, d\tau(\eta) = \int_{\CS_\beta} f(\eta)\, d\tau(\eta).
\]
In fact it is enough to prove these equations for functions
of the form $f\circ\pi_n,$ where $f\in C({\mathbb T^d}).$
We prove the second equality, the first
being proved analogously. \begin{align*}
\int_{\CS_\beta}(f\circ \pi_n)\circ\sigma^{-1}(\eta) |m(\pi_0(\eta)|^2\, d\tau(\eta)
& = \int_{\CS_\beta}  f(\beta(\pi_n(\eta))) |m(\beta^n(\pi_n(\eta))|^2\, d\tau(\eta) \\
& = \int_{\CS_\beta} (f\circ\beta \times |m\circ\beta^n|^2)\circ \pi_n(\eta)\, d\tau(\eta) \\
& = \int_{\mathbb T^d} \frac1{N^n} \sum_{\beta^n(w)=z} f(\beta(w)) |m(\beta^n(w))|^2 \prod_{j=0}^{n-1} |m(\beta^j(w))|^2\, dz \\
& = \int_{\mathbb T^d} \frac1{N^n} \sum_{\beta^n(w)=z} f(\beta(w)) |m(w)|^2 \prod_{j=1}^n |m(\beta^j(w))|^2\, dz \\
& = \sum_{\beta(x)=1} \int_{\mathbb T^d} \frac1{N^{n+1}} \sum_{\beta^n(w)=z} f(\beta(wx)) |m(wx)|^2 \prod_{j=1}^n |m(\beta^j(wx))|^2\, dz \\
& = \int_{\mathbb T^d} \frac1{N^n} \sum_{\beta^n(w)=z} f(\beta(w) )\prod_{j=1}^n |m(\beta^j(w))|^2\, dz \\
& = \int_{\mathbb T^d} \frac1{N^n} \sum_{\beta^n(w)=z} f(w)\prod_{j=0}^{n-1} |m(\beta^j(w))|^2\, dz \\
& = \int_{\CS_\beta} f\circ \pi_n(\eta)\, d\tau(\eta).
\end{align*}
(Note that we made use of a generalization of the alternate form of the definition of $\tau$ given in  \cite[Proposition~4.2(i)]{dut}:
$\int_{\CS_{\beta}} (f\circ\pi_n)\,d\tau=\int_{\T} \frac{1}{N^{n}}\Big(\sum_{\beta^n(w)=z} f(w)\big(\textstyle{\prod_{j=0}^{n-1}|m(\beta^n(w))|^2}\big)\Big)\,dz.$)

\end{proof}

In a broader context where $\beta$ is the multiplicative dual of multiplication by an arbitrary expansive integer dilation matrix $A$, B. Brenken has noted in \cite{brenk} that the corresponding solenoid $\CS_{\beta}$ can be expressed as a locally trivial principal $\Sigma$-bundle over $\mathbb T^d$ for some $0$-dimensional group $\Sigma.$ In Corollary \ref{solenoidauto}, $A$ is the diagonal matrix with diagonal entries $N_1,N_2,\cdots, N_d.$ In the case where $d=1,\;\Sigma$ is a variant of the Cantor set.  
Using another point of view, Jorgensen in \cite{Jor2} views $\CS_{\beta}$ as a bundle over $\mathbb T^d$ whose fibers can be viewed as ``random walks".  Given a filter $m$ on $\T^d,$ for each $z\in \mathbb T^d$ he is able to define a probability measure $P_z$ on the space of trees in the random walks.  
In this section, our aim is to combine the approaches of Brenken and Jorgensen and express the measure $\tau$ on $\CS_{\beta}$ as a direct integral measure over $\T^d$ of Borel measures $\nu_z$ each defined on $\Sigma.$ The measures $\nu_z$ correspond to the random walk measures $P_z$ of Jorgensen in a natural way. 

We first observe that by using exact sequences of compact abelian groups, the fiber $\Sigma$ over $z\in \T^d$ can be describe in the following way:  if 
$$\CS_{\beta}\;=\;\{(z_n)_{n=0}^{\infty}:\;z_n\in\;\mathbb T^d; z_n=\beta(z_{n+1}),n\geq 0\},$$  then fixing $z\in \mathbb T^d,$ the fiber $\pi_0^{-1}(\{z\})$ can be identified with the $0$-dimensional group 
$$\Sigma_{\beta}\;=\;\;\{(z_n)_{n=0}^{\infty}:\;z_n\in\;\mathbb T^d;\;z_n=\beta(z_{n+1}), n\geq 0;\;z_0=1_{\mathbb T^d}\}.$$
Choosing any Borel cross-section $c:\T^d\;\rightarrow \;\CS_{\beta}$ satisfying $\pi_0\circ c\;=\; \text{Id}_{\mathbb T^d},$ 
the correspondence $\rho: \pi_0^{-1}(\{z\})\rightarrow \;\Sigma_{\beta}$ is given by  
$$\rho((z_n)_{n=0}^{\infty})=(z_n)_{n=0}^{\infty}\cdot c(z_0)^{-1}.$$

\begin{example}
\label{1-d}
{\rm We can describe the map $\rho$ explicitly in the case where $d=1$ and $A=(N)$.  We define a Borel cross-section $c:\T\to\CS_{\beta}$ by 
$$c(e^{2\pi i t})\;=\;(e^{\frac{2\pi it}{N^n}})_{n=0}^{\infty},$$
for $t\in[0,1)$ 
The map $\rho: \pi_0^{-1}(\{z\})\rightarrow \;\Sigma_{\beta},$ in this case is given by 
$$\rho((z_n)_{n=0}^{\infty})\;=\;(z_n\cdot e^{-\frac{2\pi it_0}{N^n}})_{n=0}^{\infty},$$
where $z_0=e^{2\pi i t_0}$.
Indeed, in this case, we can go further, and identify $\Sigma_{\beta}$ with the compact abelian group of $N$-adic integers, which is $0$-dimensional, as follows. Let $\Z_N$ denote the $N$-adic integer group, so that 
$$\Z_N\;=\;\prod_{j=0}^{\infty}\{0,1,\cdots,N-1\},$$ with the product topology coming from the discrete topology on the finite set \\$\{0,1,\cdots,N-1\}$ and group action induced by odometer addition, with the ``carrying" operation taking place to the right. The map $\Phi:\prod_{j=0}^{\infty}\{0,1,\cdots,N-1\}=\Z_N\;\to \;\Sigma_{\beta}$ is given by 
$$\Phi((a_n)_{n=0}^{\infty})= (z_n)_{n=0}^{\infty},$$
where
$$z_0\;=\;1\;\text{and } z_n=e^{2\pi i \frac{\sum_{j=0}^{n-1}a_jN^{j}}{N^{n}}}\text{for}\;n>1.$$
By construction, $z_0=1,$ and it is easily checked that $\beta(z_{n})=(z_{n})^N=z_{n-1},\;\forall n\in \N.$
}
\end{example}

Routine calculations show that in the above example, there is a Borel isomorphism $\Theta$ between the Cartesian product
$\T\times \Sigma_{\beta}$ and the $N$-solenoid $\CS_{\beta}$ given by 
$$\Theta:\T\times\Sigma_{\beta}\equiv\mathbb T\times\mathbb Z_N\;\rightarrow\;\CS_{\beta},$$
where $$\Theta(z,(a_n)_{n=0}^{\infty})\;=\;c(z)\cdot (1,e^{2\pi i a_0/N}, e^{2\pi i \frac{a_0 +a_1N}{N^2}},\cdots,e^{2\pi i \frac{\sum_{j=0}^{n-1}a_jN^{j}}{N^{n}}},\cdots).$$ 

To generalize Example  \ref{1-d} to higher dimensions,  we will use the notation\\ $\text{e}(\bold t)\equiv\text{e}(t_1,t_2,\cdots,t_d)\;=\;(e^{2\pi i t_1},e^{2\pi i t_2},\cdots, e^{2\pi i t_d}). $
Write $\Sigma_{\beta}$ for the kernel of the projection map $\pi_0:\CS_{\beta}\to \mathbb T^d,$ and define the cross section $c:\mathbb T^d\rightarrow\CS_{\beta}$ by $$c(e(\bold t))=(e(A^{-n}\bold t))_{n=0}^{\infty}.$$
There is a Borel isomorphism $\Theta:\T^d\times\Sigma_{\beta}\;\rightarrow\;\CS_{\beta}$ defined by 
$$\Theta(z,\eta)=c(z)\eta$$
Under the Borel isomorphism $\Theta,$ the shift map 
$\sigma: \CS_{\beta}\mapsto \CS_{\beta}$  corresponds to a map on the Cartesian product $\T^d\times \Sigma_{\beta}$ which we denote by
$$\widetilde{\sigma}=\Theta^{-1}\circ\sigma\circ \Theta:\T^d\times \Sigma_{\beta}\;\to\;\T^d\times \Sigma_{\beta}.$$

A computation then shows that 
\begin{equation}
\label{deftildesigma}
\widetilde{\sigma}(\text{e}({\bf t}),(\eta_n)_{n=0}^{\infty}))=
(\text{e}(A^{-1}({\bf t}))\eta_1,([\text{e}(A^{-(n+1)}({\bf t}))\eta_{n+1}]\cdot 
[c(\text{e}(A^{-1}({\bf t})\eta_1)_n]^{-1})_{n=0}^{\infty}), 
\end{equation}
and
\begin{equation}
\label{deftildesigmainverse}
\widetilde{\sigma}^{-1}(\text{e}({\bf t}),(\eta_n)_{n=0}^{\infty})
=(\text{e}({\bf t}), (1, (\text{e}(A^{n-1}({\bf t}))\cdot \eta_{n-1}[c(\text{e}(A({\bf t})))_n]^{-1})_{n=1}^{\infty}).
\end{equation}
where $\eta_0=1$ and $c:\T^d\to\CS_{\beta}$ is the Borel cross-section discussed above.

We  let 
$$\Z^d_{\bf A}\;=\;\prod_{j=0}^{\infty}[\{0,1,\cdots,N_1-1\}\times \{0,1,\cdots,N_2-1\}\times\cdots \times\{0,1,\cdots,N_d-1\}]\subset \prod_{j=0}^{\infty}[\Z^d].$$ 
Just as in the above example, we can identify $\Sigma_{\beta}$ with $\Z^d_{\bf A}.$  It is not difficult to show that  the map $\Theta$ then can be described by 
$$\Theta(z,({\bf a}_n)_{n=0}^{\infty})\;=\;c(z)\cdot (1,\text{e}(A^{-1}({\bf a}_0)), \text{e}(A^{-2}({\bf a}_0 +A({\bf a}_1))),\cdots,
\text{e}(A^{-n}(\sum_{j=0}^{n-1}A^j({\bf a}_j))),\cdots).$$ 
while $\widetilde{\sigma}$ can be described by
\begin{equation}
\label{deftildesigmafunny}
\widetilde{\sigma}(\text{e}({\bf t}), ({\bf a}_j)_{j=0}^{\infty})\;=\;(\text{e}(A^{-1}({\bf t}+{\bf a}_0)),({\bf a}_{j+1})_{j=0}^{\infty}).
\end{equation}
Similarly, if we define $s:[0,1)^d\to \mathbb Z_A$ by $s({\bf t})={\bf k}=(k_1,k_2,\cdots, k_d)$ if ${\bf t}\in [\frac{k_1}{N_1},\frac{k_1+1}{N_1})\times [\frac{k_2}{N_2},\frac{k_2+1}{N_2})\times \cdots \times [\frac{k_d}{N_d},\frac{k_d+1}{N_d}) ,$ then $\widetilde{\sigma}^{-1}$ is giving by 
\begin{equation}
\label{deftildesigmainvfunny}
\widetilde{\sigma}^{-1}(\text{e}({\bf t}), ({\bf a}_j)_{j=0}^{\infty})\;=\;(\text{e}(A({\bf t})), (s({\bf t}),{\bf a}_0,{\bf a}_1,\cdots)).
\end{equation}

Using the map $\Theta,$ we obtain the following result concerning the decomposition of the measure $\tau$ of Proposition \ref{tauexists}.  We remark here that while we have been able to make an explicit decomposition, of the measure, there is a general theorem
that could have been applied.  References to earlier work in this direction and a proof of a suitable
general theorem can be found in Lemma 4.4 of \cite{ef}.
\begin{proposition}
\label{taudecomp}
Let $m:\T^d\to\C$ be a Borel function such that the inverse image $m^{-1}(\{0\})$ has Haar measure equal to $0$ that in addition satisfies 
$$\sum_{\{w\in\T^d: \beta(w)=z\}}|m(w)|^2=N,\;a.e. \;z\in \T^d.$$ Let $\tau$ be the Borel measure on $\CS_{\beta}$ constructed using the filter $m.$   Using the Borel isomorphism $\Theta:\T^d\times \Z^d_{A}\;\rightarrow\;\CS_{\beta}$ defined in the above paragraph, the measure $\widetilde{\tau}=\tau\circ\Theta$ corresponds to a direct integral 
$$\widetilde{\tau} = \int_{\T^d}d\nu_z dz,$$ where the measure $\nu_z$ for $z=e(\bold t)$ on $\pi_0^{-1}(\{z\})\cong \Sigma_{\beta}\cong\mathbb Z^d_A$ is defined on cylinder sets by 
$$\nu_z(\{{\bf a}_0\}\times\cdots \times \{{\bf a}_{k-1}\}\times \prod_{j=k}^{\infty}[\{0,1,\cdots,N_1-1\}\times \{0,1,\cdots,N_2-1\}\times\cdots \times\{0,1,\cdots,N_d-1\}])$$
$$=\;\frac{1}{N^k}\prod_{j=1}^{k}|m(\text{e}(A^{-j}({\bf t})\cdot \text{e}(A^{-j}(\sum_{i=0}^{j-1}A^i({\bf a}_i)))|^2.$$ 
\end{proposition}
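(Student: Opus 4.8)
The plan is to construct each fiber measure $\nu_z$ directly from the stated cylinder formula via the Kolmogorov extension theorem, and then to identify the resulting direct integral $\int_{\T^d}\nu_z\,dz$ with $\widetilde\tau=\tau\circ\Theta$ by testing both measures against the cylinder functions $f\circ\pi_n$, which form a determining class. The engine throughout is Proposition \ref{tauexists} together with its ``pullback'' reformulation $\int_{\CS_\beta}(f\circ\pi_n)\,d\tau=\int_{\T^d}\frac1{N^n}\sum_{\beta^n(w)=z}f(w)\prod_{j=0}^{n-1}|m(\beta^j(w))|^2\,dz$, quoted in the proof of Proposition \ref{quasiinvariant} from \cite[Proposition~4.2(i)]{dut}.

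First I would fix $z=\text{e}({\bf t})$ and verify that the cylinder formula is Kolmogorov-consistent, so that it extends to a genuine Borel probability measure on $\Z^d_A$. The only factor of the level-$(k+1)$ product depending on the new coordinate ${\bf a}_k$ is the $j=k+1$ term. Writing $w_0=\text{e}(A^{-(k+1)}({\bf t}+\sum_{i=0}^{k-1}A^i{\bf a}_i))$, this term equals $|m(w_0\cdot\text{e}(A^{-1}{\bf a}_k))|^2$, and as ${\bf a}_k$ ranges over $\{0,\dots,N_1-1\}\times\cdots\times\{0,\dots,N_d-1\}$ the factor $\text{e}(A^{-1}{\bf a}_k)$ traverses the full kernel of $\beta$, so the points $w_0\cdot\text{e}(A^{-1}{\bf a}_k)$ sweep out the fiber $\{w:\beta(w)=\beta(w_0)\}$. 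Summing the term over ${\bf a}_k$ therefore gives $\sum_{\beta(w)=\beta(w_0)}|m(w)|^2=N$ by the filter identity, and the leading $1/N^{k+1}$ collapses to $1/N^k$, reproducing the level-$k$ cylinder measure; the empty product at level $0$ gives total mass $1$. Since each cylinder measure is manifestly a Borel function of ${\bf t}$, a monotone-class argument upgrades $z\mapsto\nu_z(E)$ to Borel measurability for all Borel $E$, so the direct integral is well defined.

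Next I would compute $\pi_n\circ\Theta$. From the explicit forms of $\Theta$ and $c$, the $n$th coordinate of $\Theta(z,({\bf a}_j))$ is $w:=\text{e}(A^{-n}({\bf t}+\sum_{j=0}^{n-1}A^j{\bf a}_j))$, so $(f\circ\pi_n)(\Theta(z,({\bf a}_j)))=f(w)$ depends only on $z$ and ${\bf a}_0,\dots,{\bf a}_{n-1}$. One checks that $\beta^n(w)=z$ and that $({\bf a}_0,\dots,{\bf a}_{n-1})\mapsto w$ is a bijection onto the $N^n$-element fiber $\{w:\beta^n(w)=z\}$ (mixed-radix uniqueness modulo $A^n\Z^d$). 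The crucial bookkeeping is the reindexing identity $\beta^{n-j}(w)=\text{e}(A^{-j}({\bf t}+\sum_{i=0}^{j-1}A^i{\bf a}_i))$, valid because $A^{i-j}{\bf a}_i\in\Z^d$ for $i\ge j$; it identifies the $j$th factor of the cylinder product with $|m(\beta^{n-j}(w))|^2$, so that $\prod_{j=1}^n|m(\text{e}(A^{-j}(\cdots)))|^2=\prod_{\ell=0}^{n-1}|m(\beta^\ell(w))|^2$ and $\nu_z$ assigns the level-$n$ cylinder carrying $w$ the mass $\frac1{N^n}\prod_{\ell=0}^{n-1}|m(\beta^\ell(w))|^2$.

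Finally I would integrate. Summing over cylinders and using the bijection with the fiber,
\[
\int_{\Z^d_A}(f\circ\pi_n\circ\Theta)(z,\cdot)\,d\nu_z=\frac1{N^n}\sum_{\beta^n(w)=z}f(w)\prod_{\ell=0}^{n-1}|m(\beta^\ell(w))|^2,
\]
and integrating over $z\in\T^d$ yields exactly the pullback form of the defining identity for $\tau$, which equals $\int_{\CS_\beta}(f\circ\pi_n)\,d\tau=\int(f\circ\pi_n\circ\Theta)\,d\widetilde\tau$. Because $\pi_m=\beta^{n-m}\circ\pi_n$ for $m\le n$, any finite product of functions $f_i\circ\pi_{n_i}$ collapses to a single $g\circ\pi_n$ with $g\in C(\T^d)$, so these functions form an algebra separating points of $\CS_\beta$, hence uniformly dense in $C(\CS_\beta)$; the two Borel measures thus agree on a determining class and coincide. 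The main obstacle I anticipate is not the integration or the density argument but the two index computations: recognizing that the consistency sum reproduces the normalization $\sum_{\beta(w)=\cdot}|m(w)|^2=N$, and aligning the shift $\ell=n-j$ between the cylinder product and the telescoping product $\prod_{\ell=0}^{n-1}|m(\beta^\ell(w))|^2$ exactly right.
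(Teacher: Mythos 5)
Your proposal is correct and takes essentially the same route as the paper's own proof: both identify $\widetilde\tau$ with $\int_{\T^d}\nu_z\,dz$ by evaluating against the functions $f\circ\pi_n\circ\Theta$, carrying out the mixed-radix bookkeeping that matches level-$n$ cylinders with the fiber $\{w:\beta^n(w)=z\}$, and comparing with the pullback form of the defining identity for $\tau$. The extra steps you include --- the Kolmogorov consistency check showing the cylinder formula actually defines a probability measure (via the filter identity), and the explicit Stone--Weierstrass determining-class argument at the end --- are left implicit in the paper and only make the argument more complete.
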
 
\begin{proof}
We have already seen how the measure $\tau$ is defined on the solenoid $\CS_{\beta}$ as follows:  If $f\in C(\T^d)$ and $k\in \N$ are fixed, then
$$\int_{\CS_{\beta}}f\circ \pi_k((z_i)_{i=0}^{\infty})d\tau((z_i))\;=\;\int_{\T^d}\frac{1}{N^k}\sum_{\{w\in \T^d: \beta^k(w)=z\}}f(w)\prod_{j=0}^{k-1} |m(\beta^j(w))|^2dz,$$
where for fixed $k\in\N,\;\pi_k:\CS_{\beta}\rightarrow \T^d$ is the projection map described earlier.  
Then for $f\in C(\T^d),$
$$\int_{\T^d}\int f\circ \pi_k\circ \Theta (z, ({\bf a}_i)_{i=0}^{\infty})d\nu_z dz$$
$$=\;\int_{\T^d}\int f\circ \pi_k(c(z)\cdot (1,\text{e}(A^{-1}({\bf a}_0), \text{e}(A^{-2}({\bf a}_0+A({\bf a}_1))),\cdots, \text{e}(A^{-k}(\sum_{j=0}^{k-1}A^j{\bf a}_j)),\cdots)d\nu_z dz$$
$$=\; \int_{[0,1)^d}\left[\int f(\text{e}(A^{-k}({\bf t}))\text{e}(A^{-k}(\sum_{j=0}^{k-1}A^j{\bf a}_j))d\nu_z\right]d{\bf t}$$
\begin{multline*}
=\int_{[0,1)^d} \frac{1}{N^k}\sum_{{\bf a}_0,{\bf a}_1,\cdots,{\bf a}_{k-1}\in \prod_{i=1}^d[\{0,1\cdots,N_i-1\}]}f(\text{e}(A^{-k}({\bf t}))\text{e}(A^{-k}(\sum_{j=0}^{k-1}A^j({\bf a}_j)))\times \\
\times \prod_{j=1}^{k}
|m(\text{e}(A^{-j}({\bf t}))\text{e}(A^{-j}(\sum_{i=0}^{k-1}A^i({\bf a}_i)))|^2d{\bf t}
\end{multline*}
$$=\;\int_{\T^d}\frac{1}{N^k}\sum_{\{w\in \T^d: \beta^k(w)=z\}}f(w)\prod_{j'=0}^{k-1}|m(\beta^{j'}w)|^2dz$$
$$=\;\int_{\CS_{\beta}}f\circ \pi_k((z_i)_{i=0}^{\infty})d\nu((z_i)).$$
It follows that with respect to the Borel isomorphism $\Theta:\T^d\times \mathbb Z^d_A\rightarrow\;\CS_{\beta},$ we have 
$$\int d\nu_z dz=\widetilde{\tau}.$$
\end{proof}

It follows directly from Proposition \ref{quasiinvariant} and the definition of $\Theta$ that
the measures $\widetilde\tau$
and $\widetilde\tau\circ \widetilde\sigma^{-1}$ are equivalent,
and that the Radon-Nikodym derivative is given by
$$\frac{d\widetilde{\tau}\circ \widetilde{\sigma}^{-1}}{d\widetilde{\tau}}=|m(z)|^2.$$

We now investigate conditions on the filter $m:\T^d\rightarrow \C$ under which any of the measures $\nu_z:\;z\in \T^d$ will be atomic.  
This has been investigated within the framework of random walks in Chapters 2 and 3 of \cite{Jor2}; here, we take a slightly different approach.

\begin{lemma}  
\label{lemmaatoms}
Let $m:\T^d\to \C$ be a filter for dilation by the matrix $A$ as defined in Proposition \ref{taudecomp}.  For each $z\in\T^d,$ let $\nu_z$ be the measure on $\Z^d_{\bf A}$ constructed in Proposition \ref{taudecomp}.  Then $\nu_z$ has atoms if and only if there is a sequence $({\bf a}_j)_{j=0}^{\infty}\in \Z^d_{\bf A}$ such that for each $n\in \N,$ 
$m(\text{e}(A^{-n}({\bf t}))\cdot \text{e}(A^{-n}(\sum_{i=0}^{n-1}A^i({\bf a}_i))))\not=0,$ and moreover 
such that the sequence
$$\frac{1}{\sqrt{N}}|m(\text{e}(A^{-n}({\bf t}))\cdot \text{e}(A^{-n}(\sum_{i=0}^{n-1}A^i({\bf a}_i))))|$$ converges to 1 at a sufficiently rapid rate. 
\end{lemma}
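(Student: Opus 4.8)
The plan is to reduce the existence of atoms of $\nu_z$ to the strict positivity of an explicit infinite product, and then to read off the stated non‑vanishing and rate conditions from standard facts about infinite products. Since $\Z^d_{\bf A}$ is a compact metrizable space and $\nu_z$ is a Borel probability measure, every atom is carried by a single point, so $\nu_z$ has an atom if and only if $\nu_z(\{({\bf a}_j)_{j=0}^{\infty}\})>0$ for some sequence $({\bf a}_j)_{j=0}^{\infty}\in\Z^d_{\bf A}$. First I would write each such singleton as the decreasing intersection of the cylinder sets $C_k$ on which $\nu_z$ was specified in Proposition \ref{taudecomp}. Writing $w_n=\text{e}(A^{-n}({\bf t}))\cdot\text{e}(A^{-n}(\sum_{i=0}^{n-1}A^i({\bf a}_i)))$ for the argument appearing there, continuity of measure from above yields
$$\nu_z(\{({\bf a}_j)_{j=0}^{\infty}\})=\lim_{k\to\infty}\nu_z(C_k)=\prod_{n=1}^{\infty}\tfrac1N\,|m(w_n)|^2.$$
Thus the problem becomes: for which sequences $({\bf a}_j)$ is this infinite product strictly positive?

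Next I would analyze the product. The filter identity $\sum_{\beta(w)=z}|m(w)|^2=N$, applied at $\beta(w_n)=w_{n-1}$ (with $w_0=z$), shows that each factor $\tfrac1N|m(w_n)|^2$ lies in $[0,1]$. A product of factors in $[0,1]$ is strictly positive precisely when (i) no factor vanishes, i.e.\ $m(w_n)\neq0$ for every $n$, and (ii) $\sum_{n=1}^{\infty}\big(1-\tfrac1N|m(w_n)|^2\big)<\infty$; indeed, taking logarithms one has $\log\prod_{n\le k}\tfrac1N|m(w_n)|^2=\sum_{n\le k}\log\big(\tfrac1N|m(w_n)|^2\big)$, and since $-\log p\sim(1-p)$ as $p\to1$, the partial sums converge to a finite limit exactly under (ii), which in particular forces $\tfrac1{\sqrt N}|m(w_n)|\to1$. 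Condition (ii) is the precise meaning of the requirement in the statement that $\tfrac1{\sqrt N}|m(w_n)|$ converge to $1$ ``at a sufficiently rapid rate''.

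Combining the two steps gives the lemma: $\nu_z$ has an atom if and only if there is a sequence $({\bf a}_j)_{j=0}^{\infty}$ for which (i) and (ii) hold, which is exactly the asserted non‑vanishing of $m(w_n)$ together with the summable rate of convergence of $\tfrac1{\sqrt N}|m(w_n)|$ to $1$. I expect the only real subtlety to be bookkeeping rather than conceptual: one must verify that the factors genuinely lie in $[0,1]$ at the relevant preimage points $w_n$, so that the clean criterion $\sum_n(1-\tfrac1N|m(w_n)|^2)<\infty$ is equivalent to positivity of the product, and this is precisely where the filter identity of Proposition \ref{taudecomp} is used; one must also pin down what ``sufficiently rapid rate'' means, namely the summability in (ii). The reduction to singletons via continuity from above, and the standard infinite‑product criterion, are then routine.
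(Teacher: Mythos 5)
Your proposal is correct and follows essentially the same route as the paper's proof: identify atoms with singletons, write the singleton as a nested intersection of cylinder sets, apply continuity from above to get the infinite product $\prod_n \frac1N|m(w_n)|^2$, and characterize its positivity by non-vanishing of the factors plus convergence of $\frac1{\sqrt N}|m(w_n)|$ to $1$ at a sufficiently rapid rate. The only difference is that you make the paper's deliberately vague ``sufficiently rapid rate'' precise as the summability condition $\sum_n\bigl(1-\tfrac1N|m(w_n)|^2\bigr)<\infty$ (justified by using the filter identity to keep each factor in $[0,1]$), which is a welcome sharpening rather than a different argument.
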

\begin{proof}
Suppose $z\in \T^d$ and $({\bf a}_j)_{j=0}^{\infty}\in \Z^d_{\bf A}$ satisfy 
$\nu_z(\{({\bf a}_j)_{j=0}^{\infty}\})>0.$
Since $({\bf a}_j)_{j=0}^{\infty}$ is the unique element of the nested intersection 
$$\cap_{k=0}^{\infty}[\{{\bf a}_0\}\times\cdots \times \{{\bf a}_k\}\times \prod_{j=k+1}^{\infty}[\prod_{i=1}^d (\{0,1,\cdots,N_i-1\})]_j],$$ it follows that writing $z=\text{e}({\bf t})$ with ${\bf t}\in [0,1)^d,$ we have 
$$\nu_z(\{({\bf a}_j)_{j=0}^{\infty}\})=\;\lim_{k\to\infty}\nu_z(\{{\bf a}_0\}\times\cdots \times \{{\bf a}_{k-1}\}\times \prod_{j=k}^{\infty}[\prod_{i=1}^d (\{0,1,\cdots,N_i-1\})]_j)$$
$$=\;\lim_{k\to\infty}\frac{1}{N^k}\prod_{j=1}^{k}|m(\text{e}(A^{-j}({\bf t}))\cdot \text{e}(A^{-j}(\sum_{i=0}^{j-1}A^i({\bf a}_i))))|^2.$$ Thus in order to have  $\nu_z(\{({\bf a}_j)_{j=0}^{\infty}\})>0$ it is necessary and sufficient that the infinite product
$$\prod_{j=1}^{\infty}\frac{1}{\sqrt{N}}|m(\text{e}(A^{-j}({\bf t}))\cdot \text{e}(A^{-j}(\sum_{i=0}^{j-1}A^i({\bf a}_i))))|$$ converge to a positive number.
This will happen if and only if 
$$m(\text{e}(A^{-j}({\bf t}))\cdot \text{e}(A^{-j}(\sum_{i=0}^{j-1}A^i({\bf a}_i))))\not=0\;\forall j\in\mathbb N$$ and the terms $\frac{1}{\sqrt{N}}|m(\text{e}(A^{-j}({\bf t}))\cdot \text{e}(A^{-j}(\sum_{i=0}^{j-1}A^i({\bf a}_i))))|$ converge to $1$ at a sufficiently rapid rate as $j\;\to\;\infty.$
\end{proof}

\begin{example}
\label{dynamicsex}
{\rm
We discuss the conditions of Lemma \ref{lemmaatoms} further in the case where $d=1,\; A=(N),$ and $m$ is low-pass in the classical sense of \cite{mal}. Thus, suppose that $m(1)=\sqrt{N}$, $m$ is Lipschitz continuous in a neighborhood of $z=1,$ and $m$ satisfies Cohen's condition, so that it is non-zero in a large enough neighborhood of $1.$ Let us also assume that $m$ is a Laurent polynomial in $z\in\T,$ (i.e., is a so-called ``FIR" filter). The discussion which follows has its roots in the description of ``cycles" given by Dutkay and Jorgensen in Section 2 of \cite{dutjorJMP}.

 Let ${\mathcal Z}_m\;=\;\{ z\in\mathbb T: |m(z)|\;=\;\sqrt{N}\}.$  Since $m$ is a Laurent polynomial, so is $|m|^2,$ and it follows that the set ${\mathcal Z}_m$ will be finite.    Following Lemma \ref{lemmaatoms}, suppose $( a_n)_{n=0}^{\infty}\in \prod_{i=0}^{\infty}\{0,1,\cdots,N-1\}$ has the property that for some $t\in[0,1)$, $m(\text{e}(\frac t{N^j})\cdot \text{e}(\frac{\sum_{i=0}^{j-1}{a}_iN^{i}}{N^j}))\not=0\;\forall j\in\mathbb N$.  In this case  it is possible that for $z=e(t)$,
$$\nu_z(\{({a}_j)_{n=0}^{\infty}\})\;>\;0,$$  so that 
$$\nu_z(\{({a}_j)_{n=0}^{\infty}\})\;=\;\prod_{j=1}^{\infty}\frac{1}{N}|m(\text{e}({t}/N^j)\cdot \text{e}(\frac{\sum_{i=0}^{j-1}{a}_iN^{i}}{N^j}))|^2.$$

Since $\lim_{j\to\infty}\text{e}({t}/N^j)=1,$ by continuity we have 
$$\lim_{j\to\infty}\frac{1}{\sqrt{N}}|m(\text{e}({t}/N^j)\cdot \text{e}(\frac{\sum_{i=0}^{j-1}{a}_iN^{i}}{N^j}))|=\lim_{j\to\infty}\frac{1}{\sqrt{N}}|m(\text{e}(\frac{\sum_{i=0}^{j-1}{a}_iN^{i}}{N^j}))|.$$

But in order for $\lim_{j\to\infty}|m(\text{e}(\frac{\sum_{i=0}^{j-1}{a}_iN^{i}}{N^j}))|^2\;=\;N,$ the values $\{\text{e}(\frac{\sum_{i=0}^{j-1}{a}_iN^{i}}{N^j})\}$ must be becoming arbitrarily close to elements in ${\mathcal Z}_m$.  Given the fact that $\#\;{\mathcal Z}_m$ is finite, and $m$ is a Laurent polynomial and continuous on the unit circle, this
dictates restrictive conditions on the choice of the $({a}_i)_{i=0}^{\infty}$.  In exploring these conditions, we will make use of the key observation that 
$$\frac{\sum_{i=0}^{k-1}a_iN^i}{N^k}\;=\;\sum_{i=1}^{k}\frac{ a_{k-i}}{N^i}.$$

One way the conditions can be met is if $\{e(\frac{\sum_{i=0}^{j-1}{a}_iN^{i}}{N^j})\}$ converges to a point $z_0$ in ${\mathcal Z}_m,$ which happens in the classical case for $z_0= 1 \in {\mathcal Z}_m.$ 
If $\lim_{k\to\infty}\text{e}( \frac{\sum_{i=0}^{k-1}{a}_iN^i}{N^k})$ exists, the sequence is Cauchy, and an easy argument shows that there must exist $J>0$ and ${a}\;\in\;\{0, 1, \cdots, N-1\}$ such that for all $j>J,\;{a}_j={a}.$ 
 
A second way to achieve $\lim_{j\to\infty}|m(\text{e}(\frac{\sum_{i=0}^{j-1}{a}_iN^{i}}{N^j}))|^2=N$ is  to have the points $e(\frac{\sum_{i=0}^{k-1}{a}_iN^{i}}{N^k})$  bounce around very close to a variety of points ${z}\in {\mathcal Z}_m$ as $k$ gets larger and larger.  
  When ${\mathcal Z}_m$ is finite, this can only happen under special circumstances.  Suppose that for some fixed positive integer $l\;\geq\; 1,$ and $\{d_1,d_2,\cdots, d_l\}\in \{0,1,\cdots, N-1\},$ we have the repeating $N$-adic expression
  $$x_k= \sum_{j=0}^{\infty}\sum_{i=1}^l\frac{d_{i+k}}{N^{jl+i}}$$
  satisfying $e(x_k)\in{\mathcal Z}_m .$ 
 for $k=0,1,\cdots,l-1$.  Then it is possible to  construct  $({a}_i)_{i=0}^{\infty}\in \prod_{i=0}^{\infty}\{0,1,\cdots,N-1\}$ that is an atom as follows: 
  
 Fix a non-negative integer $J,$ and choose ${a}_0,\;{a}_1,\;{a}_2,\cdots, {a}_{Jl}\in\{0,1,\cdots, N-1\}$ arbitrarily, and for all $j\geq 0$ and $i=1,\cdots, l$, let 
$${a}_{(J+j)l+i}=d_{l-i+1}$$
It will then be the case that as $k\to\infty,$
$$\text{e}\left(\frac{\sum_{i=0}^{k-1}a_iN^i}{N^k}\right )\;=\;\text{e}(\sum_{i=1}^{k}\frac{a_{k-i}}{N^i})$$ will tend to one of the 
$l$ values $e({x}_0),e({x}_1),\;\cdots,e(x_{l-1})\in{\mathcal Z}_m.$

It might seem that any choice 
$\{{d}_1,{d}_2,\cdots,\;{d}_l\}$ will do in this construction, for $l$ as large as we want.  But this is not the case, because as noted above, the accumulation points $\text{e}({x}_0),\text{e}({x}_1),\cdots, \text{e}({x}_{l-1})$ all need to be in ${\mathcal Z}_m.$ Moreover if one has made such a  choice of ${d}_i$'s, with
$\text{e}({x}_0)=\text{e}(\sum_{j=0}^{\infty}\sum_{i=1}^l\frac{d_{i}}{N^{jl+i}}),$  then
$$\text{e}({x}_0)=\text{e}(\sum_{i=1}^l\frac{{d}_i}{N^i}\sum_{j=0}^{\infty}\frac{1}{(N^l)^j})$$
$$\;=\;\text{e}(\sum_{i=1}^l\frac{{d}_i}{N^i}\frac{N^l}{N^l-1})=\;\text{e}(\frac{\sum_{i=1}^l{d}_i\cdot N^{l-i}}{N^l-1})$$
$$=\;e^{2\pi i \frac{\sum_{i=0}^{l-1}{d}_{l-i}\cdot N^{i}}{N^l-1}}.$$
Thus the only possible elements of ${\mathcal Z}_m$ that can possibly give rise to sequences having atoms are numbers modulus $1$ of the form $\text{e}({r})$ where ${r}$ is a rational number in $[0,1)$ lying in 
$$\cup_{l=1}^{\infty}\{{r}=\frac{p}{q};\;q=N^{l}-1\;\text{and}\;0\leq\;p\;<N^l-l\}.$$

We leave to the interested reader the verification that if $d>1$ with dilation factors $N_1,\;N_2,\;\cdots,\;N_d,$
the only possible elements of ${\mathbb Z}_m$ that can possibly give rise to sequences having atoms are numbers modulus $1$ of the form $\text{e}(\bf{r})$ where $\bf{r}$ is a $d$-tuple of rational numbers in $[0,1)^d$ that are elements of 
$$\cup_{i=1}^d\cup_{l_i=1}^{\infty}\{{r}=(\frac{p_1}{q_1},\frac{p_2}{q_2}, \cdots, \frac{p_d}{q_d});\;q_i=N^{l_i}-1\;\text{and}\;0\leq\;p_i\;<N^{l_i}-l\}.$$
}
\end{example}

More generally, it is intriguing that the existence of atoms is independent of the fiber $z=\text{e}({\bf t})\;\in\;\T^d$ chosen.   However, if an atom does exist in the fiber, the value of the fiber measure $\nu_z$ on the atom will depend on $z.$

We expand further on the decomposition of the measure $\tau$ in the case of a classical low-pass filter, and recall Theorem 6.6 of \cite{aijkln}. Reviewing the set-up for this theorem, let $m:\T^d\to \C$ be a low-pass filter for dilation by $N$ in the sense of \cite{mal}, that is, suppose that $m$ is Lipschitz continuous at $1,$ is non-zero in a sufficiently large neighborhood of $1,$ and satisfies
$$\sum_{w: \beta(w)=z}|m(w)|^2=N,\;a.e. z\in \T^d.$$  We also assume $m$ be a Laurent polynomial in the variables $(z_1,z_2,\cdots z_d)$ of $z.$ Let $A$ be the diagonal matrix with diagonal entries $N_1, N_2,\cdots, N_d,$ and let $N=\prod_{i=1}^d N_i.$  It is then a standard construction due to Mallat and Meyer that defining the Fourier transform of the scaling function $\phi\;\in\; L^2(\R^d)$ by 
$$\widehat{\phi}({\bf t})\;=\;\prod_{j=1}^{\infty}\frac{1}{\sqrt{N}}m(\text{e}(A^{-j}({\bf t})),$$ this product converges pointwise and in $L^2(\R^d).$ Theorem 6.6 of \cite{aijkln} states that if we define the winding line map 
$w:\R^d\to\CS_{\beta}$ by $$w({\bf t})\;=\;(\text{e}({\bf t}),\text{e}(A^{-1}({\bf t})),\;\text{e}(A^{-2}({\bf t})),\cdots, \text{e}(A^{-n}({\bf t})),\cdots, ),$$
then for any $f\in L^2(\CS_{\beta},\tau),$
$$\int_{\CS_{\beta}}f\;d\tau\;=\;\int_{\mathbb R^d}f\circ w({\bf t})|\widehat{\phi}({\bf t})|^2dt.$$
This result states that $\tau$ is supported on the image of $w$ in $\CS_{\beta},$ that is, on the winding line $L_w,$ the image of $\R$ under $w$ in the $N$-solenoid.  In this case, the structure of the fiber measures $\nu_z$ can be deduced exactly.   Let $O_m\;=\;\{x\in [0,1): |m(e^{2\pi ix})|=\sqrt{N}\}$  and $F_m\;=\{(a_n)_{n=0}^{\infty}: \;\sum_{i=0}^{\infty}\frac{a_i}{N^{i+1}}\;=\;x\in\;O_m\}$ be as before. Defining $z=e^{2\pi i t}$ with $t\in [-\frac{1}{2},\frac{1}{2}),$ if we fix $(a_n)_{n=0}^{\infty}\in F_m$ and make the assumption that $m(e^{2\pi i t/N^n}\cdot e^{2\pi i \frac{\sum_{i=0}^{n-1}a_iN^{i}}{N^n}})\not=0\;\forall n\in\mathbb N,$ we then obtain 
$$\nu_z(\{(a_n)_{n=0}^{\infty}\})\;=\;\prod_{j=1}^{\infty}\frac{1}{N}|m(e^{2\pi i t/N^j}\cdot e^{2\pi i \frac{\sum_{i=0}^{j-1}a_iN^{i}}{N^j}})|^2\;>\;0.$$
As a result of this analysis, we have deduced the following result:
\begin{proposition}
\label{decompclas}
Let $m:\T^d\to\;\C$ be a classical low-pass filter, and let $\tau$ be the Borel probability measure on the solenoid $\CS_{\beta}$ associated to $m$ by Proposition \ref{tauexists}.  Decompose $\tau$ as $d\tau=\int_{\T^d} d\nu_zdz$ as in Proposition \ref{taudecomp}.  Then for almost every $z\in \T^d,$ there is a countable subset $E_z\subset \pi_0^{-1}(\{z\})$ such that $\nu_z(\omega)>0\;\forall \omega\in E_z$ and  $\nu_z(\pi_0^{-1}(\{z\})\backslash E_z)=0.$
\end{proposition}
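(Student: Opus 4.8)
The plan is to exploit the fact, recorded in the paragraph preceding the statement, that for a classical low-pass filter the measure $\tau$ is carried by the winding line $L_w=w(\R^d)\subset\CS_{\beta}$, together with the disintegration $d\tau=\int_{\T^d}d\nu_z\,dz$ furnished by Proposition \ref{taudecomp}. The strategy is to show that for almost every $z$ the probability measure $\nu_z$ is concentrated on the \emph{countable} set $L_w\cap\pi_0^{-1}(\{z\})$, from which pure atomicity follows at once.

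First I would pin down the trace of the winding line on a single fiber. Fixing $z=\text{e}({\bf t}_0)$, a point $w({\bf t})$ lies in $\pi_0^{-1}(\{z\})$ exactly when $\text{e}({\bf t})=z$, i.e. when ${\bf t}\in {\bf t}_0+\Z^d$; hence
$$L_w\cap\pi_0^{-1}(\{z\})=\{w({\bf t}_0+{\bf n}):{\bf n}\in\Z^d\}$$
is countable. Since $w$ is continuous and $\R^d$ is $\sigma$-compact, $L_w=\bigcup_k w([-k,k]^d)$ is a countable union of compact sets, hence Borel, so the sets above are measurable.

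Next I would transfer the winding-line support down to the fibers. Theorem 6.6 of \cite{aijkln}, quoted above, gives $\int_{\CS_{\beta}}f\,d\tau=\int_{\R^d}(f\circ w)\,|\widehat{\phi}|^2\,dt$, whence $\tau(\CS_{\beta}\setminus L_w)=0$. Applying the disintegration to the Borel set $B=\CS_{\beta}\setminus L_w$ (identifying via $\Theta$ so that each $\nu_z$ lives on $\pi_0^{-1}(\{z\})$) yields
$$0=\tau(B)=\int_{\T^d}\nu_z\big(\pi_0^{-1}(\{z\})\setminus L_w\big)\,dz,$$
and since the integrand is nonnegative it must vanish for almost every $z$. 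Thus for a.e. $z$ the measure $\nu_z$ is concentrated on the countable set $C_z:=L_w\cap\pi_0^{-1}(\{z\})$.

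Finally I would read off the conclusion. A measure concentrated on a countable set is purely atomic: by countable additivity $\nu_z(C_z)=\sum_{\omega\in C_z}\nu_z(\{\omega\})$, so setting $E_z=\{\omega\in C_z:\nu_z(\{\omega\})>0\}$ produces a countable set with $\nu_z(\{\omega\})>0$ for every $\omega\in E_z$ and $\nu_z(\pi_0^{-1}(\{z\})\setminus E_z)=0$; that $E_z$ is nonempty is automatic since $\nu_z$ is a probability measure. The positivity of the individual atoms is exactly the criterion of Lemma \ref{lemmaatoms}, evaluated along the winding-line orbits as in the explicit computation above. The only point requiring real care is the measure-theoretic hygiene of the transfer step, namely the Borel measurability of $L_w$ and the legitimacy of inserting a $\tau$-null Borel set into the disintegration of Proposition \ref{taudecomp}; once these are secured the argument is a Fubini-type observation and the remainder is routine.
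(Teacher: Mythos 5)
Your proposal is correct and takes essentially the same route as the paper's own proof: both rest on Theorem 6.6 of \cite{aijkln} (so that $\tau$ is carried by the winding line $L_w$), disintegrate the $\tau$-null complement of $L_w$ to get $\nu_z\big(\pi_0^{-1}(\{z\})\setminus L_w\big)=0$ for a.e.\ $z$, and observe that $L_w\cap\pi_0^{-1}(\{z\})=\{w({\bf t}+{\bf k}):{\bf k}\in\Z^d\}$ is countable. The only difference is in the last step: the paper explicitly computes the atom masses as $|\widehat{\phi}({\bf t}+{\bf k})|^2$ and invokes $\sum_{{\bf k}\in\Z^d}|\widehat{\phi}({\bf t}+{\bf k})|^2=1$ to see that the countable set carries full mass, whereas you obtain pure atomicity abstractly from countable additivity of $\nu_z$ on the countable carrier --- a cleaner shortcut that reaches the same conclusion.
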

\begin{proof}
We know $\nu$ is supported on $L_w=\;\{w(t):t\in \R\}\subset \CS_{\beta}.$  Because of this, 
$\tau(\CS_{\beta}\backslash L_w)=0.$  It follows that for almost all $z\in \T,\;\nu_z(\pi_0^{-1}(\{z\})\backslash L_w)=0.$  Thus for almost all $z\in\T,$ the nature of the measure $\nu_z$ on $\pi_0^{-1}(\{z\})$ can be deduced from its behavior on the intersection $\pi_0^{-1}(\{z\})\cap E_w.$
Now an arbitrary element 
$$(\text{e}({\bf x}),\text{e}(A^{-1}({\bf x})) ,\;\text{e}(A^{-2}({\bf t})),\cdots, \text{e}(A^{-n}({\bf x})),\cdots, )\;\in\;L_w$$ 
will be an element of $\pi_0^{-1}(\{z\})$ for $z=\text{e}({\bf t}),\;{\bf t}\;\in\;[0,1)^d$ if and only if $\text{e}(({\bf x})=z$ which will occur if and only if ${\bf x}={\bf t}+{\bf k}$ for some ${\bf k}\in \Z^d,$ where ${\bf t}$ is the unique element in $[0,1)^d$   such that $\text{e}({\bf t})=z.$  We note that each ${\bf k}\in \mathbb Z^d$ gives rise to a different element of $E_w.$ Thus $\{w({\bf t}+{\bf k}):\;{\bf k}\in\Z^d\}\;=\;L_w\cap\pi_0^{-1}(\{z\})$ and for almost all $z,\;\nu_z$ is supported on this countable set.

To give an example, suppose $d=1,$ and suppose ${\bf k}\in\;\N$ is a positive integer. Give $k$ its $N_1=N$-adic expansion, i.e. write 
${\bf k}\;=\;\sum_{i=0}^{j_k-1}{\bf a}_iN^{i}$ for a suitable choice of digits ${\bf a}_i\in\{0,1,\cdots,N-1\}.$
Then for any $n\in N$ with $n\geq j_k,$ the number 
$$\text{e}(\frac{{\bf k}}{N^n})\;=\;\text{e}(\frac{\sum_{i=0}^{j_k-1}{\bf a}_iN^{i})}{N^n})=\;\text{e}(\frac{\sum_{i=0}^{j_k-1}{\bf a}_iN^{i})}{N^{n}}),$$
 so that considering the map $\Theta^{-1}:\CS_{\beta}\to \T\times \prod_{j=0}^{\infty}\{0,1,\cdots,N-1\},$ 
$$\Theta^{-1}(\{w({\bf t}+{\bf k})\})\;=\;\{(\text{e}({\bf t}), ({\bf a}_0,{\bf a}_1,\cdots, {\bf a}_{j_k-1},0,0,\cdots, )\}.$$  We deduce from the earlier portion of the proof that 
$$\nu_z(\{({\bf a}_0,{\bf a}_1,\cdots, {\bf a}_{j_k-1},0,0,\cdots, )\})\;=\;$$
$$=\;\lim_{n\to\infty}\frac{1}{N^n}\prod_{j=1}^{n-1}|m(\text{e}({\bf t}/N^j)\cdot \text{e}(\frac{\sum_{i=0}^{j-1}{\bf a}_iN^{i}}{N^j})|^2=\;\prod_{j=1}^{\infty}\frac{|m(\text{e}(({\bf t}+{\bf k})/N^j)|^2}{N}=\;|\widehat{\phi}({\bf t}+{\bf k})|^2.$$  Similarly, if $-{\bf k}$ is a negative integer, one can use the $N$-adic expansion of $-{\bf k}$ with only a finite number of digits not equal to $N-1$ to show that 
for $({\bf b}_0=N-{\bf a}_0,{\bf b}_1=N-1-{\bf a}_1,\cdots, {\bf b}_{j_k-1}=N-1-{\bf a}_{j_k-1},N-1,N-1, N-1,\cdots, )\in\;\prod_{j=0}^{\infty}\{0,1,\cdots,N-1\},$ we have 
$$\nu_z(\{({\bf b}_0,{\bf b}_1,\cdots, {\bf b}_{j_k-1},{\bf b}_{j_k}=N-1,N-1, N-1\cdots, )\})=\;|\widehat{\phi}({\bf t}-{\bf k})|^2.$$

It follows that setting $W_z= \;\pi_0^{-1}(\{z\}\cap L_w,$ we have that $W_z$ is countable, and since    
$$\nu_z(W_z)\;=\;\nu_z(\cup_{k\in\Z^d}\{(e^{\frac{2\pi i (t+k)}{N^n}})_{n=0}^{\infty}\})$$
$$=\;\sum_{k\in\Z^d}\nu_z(\{(\text{e}(A^{-n}({\bf t}+{\bf k}))_{n=0}^{\infty}\})\;=\;\sum_{k^d\in\Z}|\widehat{\phi}({\bf t}+{\bf k})|^2=\;1,$$
if we let $E_z$ consist of the set of those points in $W_z$ having positive measure, $E_z$ is both countable and made up of atoms with $\nu_z(E_z)=1.$  Since $\tau(\cup_{z\in \T^d}E_z)=\int_{\T^d}\nu_z(E_z)dz\;=1=\;\tau(\CS_{\beta}),$ it follows that we must have  $\nu_z(\pi_0^{-1}(\{z\}\backslash E_z)=0,$ for almost all $z\in \T^d.$
\end{proof}

\begin{example}
{\rm   Consider the classical Haar filter $m(z)=\frac{1+z+z^2}{\sqrt{3}}$ defined on $\T$ corresponding to dilation by $N=3.$  We have seen in the proposition above that the measure $\nu_z$  corresponding to the fiber 
$$\{z\}\times \prod_{j=0}^{\infty}\{0,1,2\}\;\subseteq\; \T\times \prod_{j=0}^{\infty}\{0,1,2\}$$ will have an atom at 
$({\bf a}_0,{\bf a}_1,{\bf a}_2,\cdots,)$ if and only if there exists $J$ such that we either have ${\bf a}_j=0 for every j>J,$ or we have ${\bf a}_j=2$ for every $j>J.$ This means that any atom $({\bf a}_0,{\bf a}_1,{\bf a}_2,\cdots,) \in \prod_{j=0}^{\infty}\{0,1,2\} \equiv \Sigma_3$ corresponds to the natural embedding of the integers in $\Sigma_3$.  Thus, if we have either ${\bf a}_j=0$ for all $j>J$ or ${\bf a}_j=2$ for all $j>J,$ and $z= e^{2\pi i t},$ the value of the fiber measure 
$\nu_z(\{({\bf a}_0,{\bf a}_1,{\bf a}_2,\cdots)\})$ will be equal to 
$|\widehat{\phi}({\bf t}+{\bf n})|^2,$ where $({\bf a}_0,{\bf a}_1,{\bf a}_2,\cdots)$ corresponds to the integer $n\in\mathbb Z.$
}
\end{example}
\begin{example}
{\rm Consider the non-classical filter $m(z)=\frac{1+z^2}{\sqrt 2}$ associated to the inflated Cantor set wavelet in \cite{dutjor42}.  We note that $m$ is Lipschitz continuous, and an easy application of the triangle inequality shows that $|m(z)|\leq \sqrt{2}<\sqrt{3}$ for all $z\in\T.$  It follows that for all $z\in \T,$ the measure $\nu_z$ never has atoms.   This is true as well to the non-classical filter $m(z,w)=\frac{1+z+w}{2}$ associated to the inflated Sierpinski gasket set wavelet in \cite{dmp}.  By the triangle inequality, we have $|m(z,w)|\leq \frac{3}{2}.$ The dilation on the inflated Sierpinski gasket set comes from the $2\times 2$ matrix $\left(\begin{array}{rr}
2&0\\
0&2\end{array}\right),$ and the square root of the determinant of this matrix is $2.$  Again, we have $\frac{|m(z,w)|}{2}\leq \frac{3}{4}<1$ for all $(z,w)\in \mathbb T^2.$   It follows that if one considers the measure $\tau$ on the inverse limit of $\T^2$ coming from the map $(z,w)\mapsto (z^2,w^2),$ the fiber measures on the generalized Sierpinski gasket sets corresponding to a fixed $(z,w)\in \T^2$ will not have atoms.}
\end{example}

\section{Finding generalized MSF wavelets in the $L^2$ spaces of solenoids} 

Let $m:\T^d\to\C$ be a Borel function with $\nu(m^{-1}(\{0\}))=0$  satisfying the filter equation $$\sum_{\{w: w^N=z\}}|m(w)|^2=N,\;a.e. z\in \T^d;$$ Here $N=|\mbox{det}(A)|,$ where $A$ is a diagonal expansive matrix. Let $\tau$ be the Borel measure on $\CS_{\beta}\;=\;\varprojlim (\T,z\mapsto z^N)$    constructed using the filter $m$ in Proposition \ref{tauexists}.  We recall from Corollary 5.8 of \cite{dut} and as modified by Corollary 6.5 of \cite{aijkln}, that if the filter $m$ gives rise to a multi-resolution analysis, either on an inflated fractal space $(X,\mu)$ or on $\R^d,$  there is a ``modified" Fourier transform ${\mathcal F}:\;L^2(X,\mu)\to L^2(\CS_{\beta},\tau),$ where $(X,\mu)$ represents the measure space carrying the original multiresolution analysis; recall that $X\subset \R^d$ and might be all of $\R^d$ with $\mu$ equal to Lebesgue measure, or might be an inflated fractal set with $\mu$ singular with respect to Lebesgue measure.

For ${\mathcal F}:\;L^2(X,\mu)\to L^2(\CS_{\beta},\tau),$ it was established in Theorem 6.4 of \cite{aijkln} that for any $f\in L^2(\CS_{\beta}, \tau),$ and every $v\in \Z^d,$
$${\mathcal F}\circ T_v \circ{\mathcal F}^{\ast}(f)((z_n)_{n=0}^{\infty})\;=\; z_0^v (f)((z_n)_{n=0}^{\infty}),\forall v\in\Z^d,$$
and 
$${\mathcal F}\circ D^{-1} \circ {\mathcal F}^{\ast}(f)((z_n)_{n=0}^{\infty})\;=\;m(z_0)\cdot f\circ \sigma^{-1}( (z_n)_{n=0}^{\infty}).$$
Here $\{T_v:\;v\in\;\Z^d\}$ are the unitary translation operators on $L^2(X,\mu)$ and $D$ is the unitary dilation operator.

Our aim in this section is to use our previous decomposition of probability measures on solenoids to set possible conditions under which $L^2(X,\mu)$ will contain within it a single unit vector $\psi$ such that $\{D^jT_v(\psi):\;v\in\Z^d,\;j\in \mathbb Z\}$ is an orthonormal basis for $L^2(X,\mu).$ It will be desirable that such a single wavelet $\psi$ should have as many properties of MSF (minimally supported frequency) wavelets in $L^2(\R^d)$ as possible. Setting $\widehat{\psi}={\mathcal F}(\psi),$ the ``single wavelet" condition is equivalent to finding a vector $\widehat{\psi}\in L^2(\CS_{\beta},\tau)$ such that 
$\{\widehat{D}^j\widehat{T_v}(\widehat{\psi}):\;\;j,k\in \mathbb Z\}$ is an orthonormal basis for $L^2(\CS_{\beta},\tau),$ where here $\widehat{D}= {\mathcal F}\circ D  \circ {\mathcal F}^{\ast}$ and $\widehat{T_v}\;=\;{\mathcal F}\circ T_v \circ {\mathcal F}^{\ast}.$ We model our study after the study of MSF wavelets in $L^2(\R^d).$

\begin{definition}
\label{MSF}
 Let $(X,\mu)$ be an inflated fractal set, and let $\psi$ be a unit vector in $L^2(X,\mu)$ that is a single orthonormal wavelet in $L^2(X,\mu)$ for translation by $\Z^d$ and dilation by the diagonal dilation matrix $A.$  We say that $\psi$ is a generalized MSF wavelet if $\widehat{\psi}$ is equal to $\lambda((z_n))\chi_E,$ for $E$ a Borel subset of $\CS_{\beta}$ such that $\tau(\sigma^j(E)\cap \sigma^k(E))=0$ for $j\not= k$ and $\tau(\CS_{\beta}\backslash (\cup_{j\in \Z} \sigma^j(E))=0,$ and some function $\lambda:\CS_{\beta}\to \C.$
\end{definition}

Definition \ref{MSF} is deceivingly simple.  For example, let $({\mathcal R}, \mu)$ be the inflated fractal set contained in $\R$ created from the Cantor set. It is an open question as to whether or not MSF-wavelets, or even single wavelets, occur in $L^2({\mathcal R}).$

\begin{remark}
{\rm 
Formula 6.7 in Theorem 6.6 of \cite{aijkln} shows us that even when $(X,\mu)=(\R^d,\text{Lebesgue}),$ and $m$ is a classical low-pass filter, and we compute ${\mathcal F}(\psi)$ for $\psi$ an ordinary MSF wavelet in $L^2(\mathbb R^d),$  we cannot expect that the function $\lambda$ will be a constant function.
}
\end{remark}

In the rest of this section, we investigate a few properties that $E\subset \CS_{\beta}$ and $\lambda:\CS_{\beta}\to \C$ would need to have in order that 
${\mathcal F}^{\ast}(\lambda\cdot \chi_E)$ be an MSF single wavelet in $L^2(X).$

First, given an MSF wavelet $\psi\in L^2(X,\mu),$ the family $\{T_v(\psi): v\in\Z^d\}$ is an orthonormal set in $L^2(X,\mu).$  It follows that 
$$\{ z_0^v(\lambda\cdot \chi_E):\; v\in\mathbb Z^d\}$$ is an orthonormal set in $L^2(\CS_{\beta},\tau),$ and from this we deduce that the family $$\{z^v \lambda\circ \Theta (z,(a_n))\chi_E\circ \Theta:\; v\in \Z^d\}$$ is an orthonormal set in 
$L^2(\T^d\times \mathbb Z_A^d, \widetilde{\tau}).$

  We now wish to apply the knowledge of $\widetilde{\tau}$ obtained in the previous section to this question.  Note that $\chi_E\circ \Theta = \;\chi_{\Theta^{-1}(E)},$
so our first aim will be to come up with conditions on a subset $E'\subset \T^d\times \mathbb Z_A^d$ and (by abuse of notation) $\lambda:\T^d\times \mathbb Z_A^d \to \C$ that will guarantee $\{ z^v \lambda \cdot (\chi_E'):\; v\in\mathbb Z^d\}$ is an orthonormal set in $L^2(\T^d\times \mathbb Z_A^d, \widetilde{\tau}).$  The following is just a restatement of the notion of orthonormality.
\begin{lemma}
\label{orthotrans}
Let $E'$ be a Borel subset of $\T^d\times \mathbb Z_A^d,$ and let $\lambda:\T^d\times \mathbb Z_A^d\to \C$ be a Borel function.  Define $h:\T^d\to \C$ by 
$$h(z)\;=\;\int_{E'_z}|\lambda(z,({\bf a}_n))|^2d\nu_z.$$
Then $\{ z^v \lambda\cdot\chi_{E'}):\; v\in\mathbb Z^d\}$ is an orthonormal set in $L^2(\T^d\times \mathbb Z_A^d, \widetilde{\tau})$ if and only if $h(z)\;\equiv 1$ as an element of $L^2(\T^d).$
\end{lemma}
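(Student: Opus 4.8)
The plan is to compute all the pairwise inner products of the family $\{z^v\lambda\cdot\chi_{E'}:v\in\Z^d\}$ directly, using the direct integral decomposition $\widetilde\tau=\int_{\T^d}d\nu_z\,dz$ from Proposition \ref{taudecomp}, and then to recognize the resulting numbers as the Fourier coefficients of $h$. Since the characters $\{z^k:k\in\Z^d\}$ form an orthonormal basis of $L^2(\T^d)$ with respect to Haar (Lebesgue) measure $dz$, orthonormality of the family will translate into a condition on these Fourier coefficients that pins down $h$ exactly.

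First I would fix $v,w\in\Z^d$ and write
$$\langle z^v\lambda\chi_{E'},\,z^w\lambda\chi_{E'}\rangle_{L^2(\widetilde\tau)}=\int_{\T^d\times\Z_A^d}z^{v-w}\,|\lambda(z,({\bf a}_n))|^2\,\chi_{E'}\,d\widetilde\tau.$$
Because the factor $z^{v-w}$ depends only on the torus coordinate, I would apply the decomposition of $\widetilde\tau$, integrating first over the fiber $\Z_A^d$ and then over $\T^d$, and pull $z^{v-w}$ out of the inner integral. Using $\chi_{E'}(z,({\bf a}_n))=\chi_{E'_z}(({\bf a}_n))$, the inner integral is exactly $\int_{E'_z}|\lambda(z,({\bf a}_n))|^2\,d\nu_z=h(z)$, so the inner product collapses to $\int_{\T^d}z^{v-w}h(z)\,dz$. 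Here I would note in passing that the measurability of $z\mapsto h(z)$ and the Fubini-type interchange are precisely the content of the direct integral structure of Proposition \ref{taudecomp}, and that $h\ge 0$ with $\int_{\T^d}h\,dz=\|\lambda\chi_{E'}\|^2_{L^2(\widetilde\tau)}$, so that $h\in L^1(\T^d)$ as soon as $\lambda\chi_{E'}\in L^2$.

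Next I would observe that $\int_{\T^d}z^{v-w}h(z)\,dz$ is a Fourier coefficient of $h$, and that as $v,w$ range over $\Z^d$ the difference $v-w$ ranges over all of $\Z^d$. Hence the family is orthonormal if and only if $\int_{\T^d}z^k h(z)\,dz=\delta_{k,0}$ for every $k\in\Z^d$, i.e. if and only if every Fourier coefficient of the integrable function $h-1$ vanishes. The converse direction is then immediate: if $h\equiv 1$, the inner products reduce to $\int_{\T^d}z^{v-w}\,dz=\delta_{v,w}$, giving orthonormality at once.

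The one point requiring care, and the main obstacle, is the forward implication: from the statement that all Fourier coefficients of $h-1$ vanish, I must conclude $h=1$ almost everywhere. This is the uniqueness theorem for Fourier coefficients of integrable functions on $\T^d$ (a function in $L^1(\T^d)$ with all Fourier coefficients equal to $0$ is itself $0$ a.e.), which applies since $h\in L^1(\T^d)$. Once $h=1$ a.e. is established, $h$ is trivially an element of $L^2(\T^d)$, and the equivalence is complete.
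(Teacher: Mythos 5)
Your proposal is correct and follows essentially the same route as the paper: both compute the inner products $\langle z^v\lambda\chi_{E'}, z^{v'}\lambda\chi_{E'}\rangle$ via the fiber decomposition $\widetilde\tau=\int_{\T^d}d\nu_z\,dz$, collapse the inner integral to $h(z)$, and identify the result as the Fourier coefficients of $h$, so that orthonormality is equivalent to $h$ having the Fourier coefficients of the constant $1$. In fact your write-up is slightly more careful than the paper's at the one delicate point --- the paper simply asserts the converse ("if the family is orthonormal, we must have $h=1$"), whereas you justify it by observing $h\in L^1(\T^d)$ and invoking the uniqueness theorem for Fourier coefficients of integrable functions.
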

\begin{proof}
 If $h(z)\;\equiv 1$ as an element of $L^2(\T^d),$ then by the theory of Fourier series, we know that
$\int_{\T^d}h(z)dz\;=\; 1,\;\text{a.e. } z\in \T^d$ and for every $v\in \mathbb Z^d\backslash \{0\},\;\int_{\T^d}z^v h(z)dz=0.$
For $v,v'\in\Z^d,$ the inner product of $z^v \lambda\cdot\chi_{E'}$ and $z^{v'} \lambda\cdot\chi_{E'},$ which we can write as  
$$\langle z^v \lambda \chi_{E'}, z^{v'}\lambda \chi_{E'}\rangle,$$
is exactly $$\int_{\T^d \times \mathbb Z_A^d}z^v \lambda\cdot\chi_{E'}\overline{z^{v'} \lambda\cdot\chi_{E'}} d\widetilde{\tau}$$
$$=\;\int_{\T^d}z^{v-v'}\left[\int_{E'_z} |\lambda(z,({\bf a}_n))|^2d\nu_z\right]dz$$
$$=\;\int_{\T^d}z^{v-v'}h(z)dz.$$
So if $h(z)\equiv 1,$ the family $\{z^v\lambda\cdot\chi_{E'}\}$ is orthonormal in $L^2(\T^d\times \mathbb Z_A^d.$  Conversely, if $\{z^v\lambda\cdot\chi_{E'}\}$ is orthonormal in $L^2(\T^d\times \mathbb Z_A^d,$ we must have $h(z)=1$ in $L^2(\T^d).$ 
\end{proof}

From this result, we obtain the following corollary, in the case where $\lambda$ depends only on the variable $z:$ 
\begin{corollary}
\label{wvsetsolen}
Let $E'$ be a Borel subset of $\T^d\times \mathbb Z_A^d,$ and $\lambda:\T^d\to \C$ is a Borel function.  Then $\{ z^v \lambda(z)\chi_{E'}):\; v\in\mathbb Z^d\}$ is an orthonormal set in $L^2(\T^d\times \mathbb Z_A^d, \widetilde{\tau})$ if and only if $\nu_z(E_z')=\frac{1}{|\lambda(z)|^2}\;\text{a.e.}\;z\; \in \;\T^d.$
\end{corollary}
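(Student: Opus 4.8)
The plan is to deduce this directly from Lemma \ref{orthotrans}, which has already reduced orthonormality of $\{z^v\lambda\chi_{E'}:v\in\Z^d\}$ to the single condition that the function
$$h(z)=\int_{E'_z}|\lambda(z,({\bf a}_n))|^2\,d\nu_z$$
equals the constant function $1$ as an element of $L^2(\T^d)$. The only new feature of the Corollary is that here $\lambda$ is assumed to depend on the base point $z$ alone, so I would first simplify $h$ accordingly and then quote the Lemma.

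First I would observe that since $\lambda(z)$ is constant along each fiber $\pi_0^{-1}(\{z\})\cong\Z_A^d$, the factor $|\lambda(z)|^2$ may be pulled out of the fiber integral defining $h$, giving
$$h(z)=|\lambda(z)|^2\int_{E'_z}d\nu_z=|\lambda(z)|^2\,\nu_z(E'_z).$$
Measurability of the map $z\mapsto\nu_z(E'_z)$, and hence of $h$, is guaranteed by the direct-integral structure $\widetilde\tau=\int_{\T^d}d\nu_z\,dz$ established in Proposition \ref{taudecomp}, so this expression for $h$ is legitimate as an element of $L^2(\T^d)$.

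Next I would apply Lemma \ref{orthotrans}: the family $\{z^v\lambda(z)\chi_{E'}:v\in\Z^d\}$ is orthonormal in $L^2(\T^d\times\Z_A^d,\widetilde\tau)$ if and only if $h\equiv 1$ in $L^2(\T^d)$, that is, if and only if $|\lambda(z)|^2\,\nu_z(E'_z)=1$ for a.e.\ $z\in\T^d$. Rearranging this scalar equality yields $\nu_z(E'_z)=1/|\lambda(z)|^2$ almost everywhere, which is exactly the asserted condition.

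There is no genuine obstacle here, as the statement is a specialization of the Lemma. The one point I would flag for care is that the equation $|\lambda(z)|^2\,\nu_z(E'_z)=1$ automatically forces $\lambda(z)\neq 0$ for almost every $z$, so that the reciprocal $1/|\lambda(z)|^2$ appearing on the right-hand side is well defined almost everywhere; I would remark on this to justify phrasing the conclusion in the stated form.
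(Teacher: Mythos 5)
Your proposal is correct and follows essentially the same route as the paper: both reduce the statement to Lemma \ref{orthotrans}, pull the fiber-constant factor $|\lambda(z)|^2$ out of the integral to get $h(z)=|\lambda(z)|^2\nu_z(E'_z)$, and rearrange the resulting a.e.\ identity. Your added remark that the identity forces $\lambda(z)\neq 0$ almost everywhere is a small but worthwhile point of care that the paper leaves implicit.
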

\begin{proof}
Assume that $\{ z^v \lambda(z)\chi_{E'}):\; v\in\mathbb Z^d\}$ is an orthonormal set in $L^2(\T^d\times \prod_{j=0}^{\infty}[\mathbb Z_A]_j, \widetilde{\tau}).$ We know from Lemma \ref{orthotrans} that $h(z)\;=\;|\lambda(z)|^2\nu_z( E_z') =1 \; \text{a.e.}\;z \in \;\T^d.$
But this implies that $\nu_z(E_z')=\frac{1}{|\lambda(z)|^2}\;\text{a.e.}\;z\; \in \;\T^d.$

Conversely, if $\nu_z(E_z')=\frac{1}{|\lambda(z)|^2}\;\text{a.e.}\;z\; \in \;\T^d,$ then $h(z)=1 \; \text{a.e.}\;z \in \;\T^d,$ so that by Lemma \ref{orthotrans} $\{ z^v \lambda(z)\chi_{E'}):\; v\in\mathbb Z^d\}$ is an orthonormal set in $L^2(\T^d\times \mathbb Z_A^d, \widetilde{\tau}).$
\end{proof}

Note now that if $E'\subset \T^d\times \mathbb Z_A^d$ satisfies the conditions of Corollary \ref{wvsetsolen}, it will be the case that 
$$\overline{\text{span}}\{ z^v \lambda(z,({\bf a}_n))\chi_{E'}):\; v\in\mathbb Z^d\}\;\subset\;L^2(E',\tilde{\tau}).$$
The following proposition gives necessary and sufficient conditions on $E'$ to have  
$$\overline{\text{span}}\{ z^v \lambda(z,({\bf a}_n))\chi_{E'}):\; v\in\mathbb Z^d\}\; = \;L^2(E',\tilde{\tau}).$$

\begin{proposition}
\label{eventougher}
Let $E'\subset \T^d\times \mathbb Z_A^d$ and $\lambda:\T^d\times \mathbb Z_A^d\to \C$ satisfy the conditions of Corollary \ref{wvsetsolen}.  Then 
$$\overline{\text{span}}\{ z^v \lambda\cdot (\chi_{E'}):\; v\in\mathbb Z^d\}\; = \;L^2(E',\widetilde{\tau})$$
if and only if upon setting $z=e({\bf t}),$ for every  $m\in \mathbb Z^d$ and $j\in \mathbb N\cup\{0\},$ 
\begin{multline*}
[e(A^{-j}({\bf t})]^m [e(\sum_{i=0}^{j-1}A^{i-j}({\bf a_i}))]^m \lambda(e({\bf t}),({\bf a}_n))\chi_{E'}((e({\bf t}), ({\bf a}_0,{\bf a}_1,\cdots, {\bf a}_n,\cdots))\\
\;\in\;\overline{\text{span}}\{ z^v \lambda\cdot \chi_{E'}:\; v\in\mathbb Z^d\}.
\end{multline*}
\end{proposition}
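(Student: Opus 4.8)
The plan is to recognize the functions displayed in the statement as the characters of the solenoid multiplied by the single fixed vector $\lambda\chi_{E'}$, and then to run a duality argument based on Stone--Weierstrass. First I would note that, by the explicit formula for $\Theta$ recorded just before Proposition \ref{taudecomp}, the $j$-th coordinate of $\Theta(\text{e}({\bf t}),({\bf a}_n))$ is
$\pi_j(\Theta(\text{e}({\bf t}),({\bf a}_n)))=\text{e}(A^{-j}({\bf t}))\cdot\text{e}(\sum_{i=0}^{j-1}A^{i-j}({\bf a}_i))$, so the function appearing in the statement is exactly $(\pi_j\circ\Theta)^m\,\lambda\chi_{E'}$. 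Thus, writing $M:=\clsp\{z^v\lambda\chi_{E'}:v\in\Z^d\}\subseteq L^2(E',\widetilde{\tau})$, the asserted condition says precisely that $(\pi_j\circ\Theta)^m\,\lambda\chi_{E'}\in M$ for all $j\geq 0$ and all $m\in\Z^d$. For $j=0$ this is just the defining family of generators of $M$, so the real content sits at the levels $j\geq 1$. The forward implication is then immediate: if $M=L^2(E',\widetilde{\tau})$, then each $(\pi_j\circ\Theta)^m\lambda\chi_{E'}$ has modulus-one first factor, hence lies in $L^2(E',\widetilde{\tau})=M$.

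For the reverse implication I would pass to the orthogonal complement. Assume $(\pi_j\circ\Theta)^m\lambda\chi_{E'}\in M$ for all $j,m$, and let $f\in L^2(E',\widetilde{\tau})$ satisfy $f\perp M$. Then $\langle f,(\pi_j\circ\Theta)^m\lambda\chi_{E'}\rangle=0$, i.e. $\int_{E'}f\,\overline{\lambda}\,\overline{(\pi_j\circ\Theta)^m}\,d\widetilde{\tau}=0$ for every $j\geq 0$ and $m\in\Z^d$. Since the normalization $\int_{E'_z}|\lambda|^2\,d\nu_z\equiv 1$ from Corollary \ref{wvsetsolen} gives $\|\lambda\chi_{E'}\|_2=1$, Cauchy--Schwarz shows $f\overline{\lambda}\chi_{E'}\in L^1(\widetilde{\tau})$, so $d\kappa:=f\overline{\lambda}\chi_{E'}\,d\widetilde{\tau}$, transported to $\CS_\beta$ via $\Theta$, is a finite complex Borel measure. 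As the conjugate of a character is again a character, the vanishing above says exactly that $\int_{\CS_\beta}\chi\,d\kappa=0$ for every $\chi$ in the family $\{(\pi_j)^m:j\geq 0,\,m\in\Z^d\}$.

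The key step is that this family is all of $\widehat{\CS_\beta}$: since $\CS_\beta=\varprojlim(\T^d,\beta)$ is a compact abelian group, its dual is $\varinjlim(\Z^d,\times A)$, realized by the $(\pi_j)^m$ subject to the identifications $(\pi_j)^m=(\pi_{j+1})^{Am}$. These characters separate points, contain the constants, and are closed under multiplication and complex conjugation, so by Stone--Weierstrass their linear span is uniformly dense in $C(\CS_\beta)$. Hence $\int h\,d\kappa=0$ for all $h\in C(\CS_\beta)$, and by the uniqueness clause of the Riesz representation theorem $\kappa=0$, i.e. $f\overline{\lambda}\chi_{E'}=0$ $\widetilde{\tau}$-a.e. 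Finally, the same normalization $\int_{E'_z}|\lambda|^2\,d\nu_z\equiv 1$ forces $\lambda\neq 0$ $\widetilde{\tau}$-a.e. on $E'$ (one may replace $E'$ by $E'\cap\{\lambda\neq 0\}$ without altering $M$ or $L^2(E',\widetilde{\tau})$), so $f=0$ $\widetilde{\tau}$-a.e. on $E'$. Therefore $M^\perp=\{0\}$ in $L^2(E',\widetilde{\tau})$, giving $M=L^2(E',\widetilde{\tau})$.

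I expect the main obstacle to be the clean identification of $\{(\pi_j)^m\}$ with the full dual group and the careful verification that Stone--Weierstrass applies, in particular the subalgebra (closure under products) property obtained by bringing two characters to a common level via $(\pi_j)^m=(\pi_{j+1})^{Am}$; by comparison the passage from vanishing Fourier--Stieltjes coefficients to $\kappa=0$, and the nonvanishing of $\lambda$ on $E'$, are routine.
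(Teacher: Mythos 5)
Your proof is correct, and its core coincides with the paper's: both you and the authors recognize the displayed functions as $(\pi_j\circ\Theta)^m\,\lambda\chi_{E'}$, i.e.\ as the full dual group $\widehat{\CS_{\beta}}\cong\varinjlim(\Z^d,A)$ acting multiplicatively on the single vector $\lambda\chi_{E'}$ (the forward implication being immediate in both treatments), and both invoke Stone--Weierstrass to get uniform density in $C(\CS_{\beta})$ of the span of these characters. Where you genuinely diverge is the passage from that uniform density to the $L^2$ conclusion. The paper stays on the ``span'' side: it deduces that $h\cdot\lambda\chi_{E}$ lies in $\clsp\{\widehat{T_v}(\lambda\chi_E):v\in\Z^d\}$ for every $h\in C(\CS_{\beta})$, then uses density of the multiplication operators $\{M_h: h\in C(\CS_{\beta})\}$ in $\{M_g: g\in L^{\infty}(\CS_{\beta},\tau)\}$ in the weak operator topology to capture $g\cdot\lambda\chi_E$ for all bounded Borel $g$, and finally identifies $L^2(E,\tau)$ with the closure of such products. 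You instead dualize: any $f$ orthogonal to the span yields a finite complex measure $f\overline{\lambda}\chi_{E'}\,d\widetilde{\tau}$ all of whose Fourier--Stieltjes coefficients vanish, and Riesz uniqueness kills it; this trades the paper's weak-operator-topology approximation for a one-line appeal to the Riesz representation theorem, which is arguably cleaner. One caveat cuts both ways: both arguments need $\lambda\neq 0$ $\widetilde{\tau}$-a.e.\ on $E'$ (otherwise the span sits inside $L^2(E'\cap\{\lambda\neq 0\},\widetilde{\tau})\subsetneq L^2(E',\widetilde{\tau})$ and the reverse implication fails); the paper passes over this silently, while you flag it, correctly noting that under the hypotheses of Corollary \ref{wvsetsolen} (where $\lambda=\lambda(z)$ and $\nu_z(E'_z)=1/|\lambda(z)|^2$) the nonvanishing is automatic. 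Your parenthetical fix of replacing $E'$ by $E'\cap\{\lambda\neq0\}$ is the one soft spot: that replacement \emph{would} shrink $L^2(E',\widetilde{\tau})$ if $\lambda$ vanished on a positive-measure subset of $E'$, so the correct justification is not the replacement but the a.e.\ nonvanishing of $\lambda(z)$ forced by the cited corollary, which you also state.
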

\begin{proof}  Suppose that $\overline{\text{span}}\{ z^v \lambda\cdot \chi_{E'} :\; v\in\mathbb Z^d\}\; = \;L^2(E',\tilde{\tau}).$  Then for every $g\in L^{\infty}(\T^d\times \prod_{j=0}^{\infty}\{0,1,\cdots, N-1\}_j,\widetilde{\tau}),\;g(z,({\bf a}_j)_{j=0}^{\infty})\cdot \lambda\cdot \chi_{E'}\in\;L^2(E',\widetilde{\tau})=\overline{\text{span}}\{ z^v \lambda(z)\chi_{E'}:\; v\in\mathbb Z^d\}.$ In particular, if we fix $m\in\mathbb Z^d$ and $j\in\mathbb N\cup\{0\},$ the function 
$$g_{m,j}(e({\bf t}), ({\bf a}_0,{\bf a }_1,\cdots, {\bf a}_n,\cdots,))\;=\;[e(A^{-j}({\bf t}))]^m [e(\sum_{i=0}^{j-1}A^{i-j}({\bf a_i}))]^m\in\;L^{\infty}(\T^d\times \mathbb Z_A^d,\widetilde{\tau}),$$ so that $[e(A^{-j}({\bf t}))]^m [e(\sum_{i=0}^{j-1}A^{i-j}({\bf a_i}))]^m\cdot\lambda\cdot\chi_{E'}\in\;\overline{\text{span}}\{ z^v \lambda\cdot\chi_{E'}:\; v\in\mathbb Z^d\}.$ 

Conversely, suppose that $z=e({\bf t}).$  For every $m\in\mathbb Z^d$ and $j\in\mathbb N\cup\{0\},$ we have that 
\begin{multline*}
[e(A^{-j}({\bf t}))]^m [e(\sum_{i=0}^{j-1}A^{i-j}({\bf a_i}))]^m \lambda\cdot\chi_{E'}((e({\bf t}), ({\bf a}_0,{\bf a}_1,\cdots, {\bf a}_n,\cdots))\\
\in\;\overline{\text{span}}\{ z^v \lambda\cdot \chi_{E'}:\; v\in\mathbb Z^d\}.
\end{multline*}
  We now consider 
$\chi=A^{-j}(m)$ as an element of $\widehat{\CS_{\beta}}.$  For each $f\in L^2(\CS_{\beta},\tau),$
$$\widehat{T_{\chi}}(f)((z_n)_{n=0}^{\infty})\;=\;(z_j)^m\cdot f((z_n)_{n=0}^{\infty}).$$  Let $E\;=\;\Theta(E').$ Then to say that 
$$[e(A^{-j}({\bf t}))]^m [e(\sum_{i=0}^{j-1}A^{i-j}({\bf a_i}))]^m\lambda\cdot \chi_{E'}((e({\bf t}), ({\bf a}_0,{\bf a}_1,\cdots, {\bf a}_n,\cdots))\;\in\;\overline{\text{span}}\{ z^v \lambda\cdot \chi_{E'}:\; v\in\mathbb Z^d\}$$ is equivalent to saying that 
$\widehat{T_{\chi}}(\lambda \cdot \chi_{E}((z_n)_{n=0}^{\infty})\in \overline{\text{span}}\{ \widehat{T_v}(\lambda\cdot\chi_{E}):\; v\in\mathbb Z^d\},\forall \chi\in \widehat{\CS_{\beta}}.$
But if $\widehat{T_{\chi}}(\lambda\cdot\chi_{E})\;\in\;\overline{\text{span}}\{ \widehat{T_v}(\lambda\cdot\chi_{E}):\; v\in\mathbb Z^d\},\forall \chi\in \widehat{\CS_{\beta}},$ it follows that for every finite linear combination of characters $p$ in $\widehat{\CS_{\beta}},\;p((z_n)_{n=0}^{\infty})\;=\;\sum_{i=1}^K\;a_i\chi_i,$ 
$M_p(\lambda \cdot \chi_E)\;=\;[\sum_{i=1}^Na_i\widehat{T_{\chi_i}}](\lambda \cdot \chi_E)\in\;\overline{\text{span}}\{ \widehat{T_v}(\lambda\cdot\chi_{E}):\; v\in\mathbb Z^d\}.$  By the Stone-Weierstrass Theorem, $\{p\in\;C(\CS_{\beta}): p \;\;\text{a polynomial in}\;\widehat{\CS_{\beta}}\}$ is dense in $C(\CS_{\beta})$ in the uniform norm.  It follows that for every $h\in C(\CS_{\beta}),\;M_h(\lambda \cdot \chi_E)\;=\;h\cdot \lambda \cdot \chi_E)\in\;\overline{\text{span}}\{ \widehat{T_v}(\lambda\cdot\chi_{E}):\; v\in\mathbb Z^d\}.$  Since $\{M_h:\;h\in C(\CS_{\beta})\}$ is dense in $\{M_g: g\in L^{\infty}(\CS_{\beta},\tau)\}$ in the weak operator topology, we obtain that 
$\{M_g(\lambda \cdot \chi_E)=g\cdot \lambda \cdot \chi_E: g\in L^{\infty}(\CS_{\beta},\tau)\}$ is contained in $\overline{\text{span}}\{ \widehat{T_k}(\lambda\cdot\chi_{E}):\; k\in\mathbb Z\}.$ Thus 
$L^2(E,\tau)\;=\;\overline{\{M_g(\lambda \cdot \chi_E)=g\cdot \lambda \cdot \chi_E: g\in L^{\infty}(\CS_{\beta},\tau)\}}$ is contained in $\overline{\text{span}}\{ \widehat{T_v}(\lambda\cdot\chi_{E}):\; v\in\mathbb Z^d\}.$ 
Since it is obvious that $\overline{\text{span}}\{ \widehat{T_v}(\lambda\cdot\chi_{E}):\; v\in\mathbb Z^d\}\subset L^2(E,\tau),$ we see that 
$$\overline{\text{span}}\{ \widehat{T_v}(\lambda\cdot\chi_{E}):\; v\in\mathbb Z^d\}\;=\; L^2(E,\tau).$$  By our measure-theoretic isomorphism between $(\CS_{\beta},\tau)$ and $(\T^d\times \mathbb Z_A^d, \widehat{\tau}),$ this implies that 
$$\overline{\text{span}}\{ z^v \lambda\circ \Theta \cdot \chi_{E'}:\; v\in\mathbb Z^d\}\;=\;L^2(E',\widetilde{\tau}),$$
as desired.
\end{proof}

Corollary \ref{wvsetsolen} and Proposition \ref{eventougher} immediately give us the following result.
\begin{corollary}
\label{corMSFtotal}
Let $E'\subset \T^d\times \mathbb Z_A^d$ and suppose that $\lambda:\T^d\times \mathbb Z_A^d \to \C$ is a Borel function. Then $\lambda\cdot \chi_{E'}$ is a generalized MSF wavelet in $L^2(\T\times \mathbb Z_A^d,\widetilde{\tau})$ if and only if the following three conditions are satisfied:
\begin{enumerate}
\item The sets $\{\widetilde{\sigma}^j(E'): j\in\mathbb Z\}$ are pairwise disjoint up to sets of $\widetilde{\tau}$ and $\widetilde{\tau}(\T^d\times \mathbb Z_A^d\backslash \cup_{j\in\mathbb Z}\widetilde{\sigma}^j(E'))=0,$
\item $\int_{E'_z}|\lambda(z,({\bf a}_n))|^2d\nu_z=1,\;\text{a.e} z\in \T.$
\item $L^2(E',\widetilde{\tau})\;=\;\overline{\text{span}}\{ z^v \lambda\cdot \chi_{E'}:\; v\in\mathbb Z^d\}.$
\end{enumerate}
\end{corollary}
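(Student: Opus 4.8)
The plan is to read the three conditions directly off the two ingredients in Definition \ref{MSF}: first, that $\widehat\psi=\lambda\cdot\chi_{E'}$ generate an orthonormal basis $\{\widehat D^{\,j}\widehat T_v(\lambda\chi_{E'}) : j\in\Z,\ v\in\Z^d\}$ of $L^2(\T^d\times\Z_A^d,\widetilde\tau)$ under dilation and translation, and second, that the support set $E'$ tile the solenoid under the powers of the shift. The second requirement is verbatim condition (1), so the whole content of the corollary is to show that, \emph{in the presence of} (1), the orthonormal-basis requirement is equivalent to the conjunction of (2) and (3). I would therefore fix (1) and analyze the generating family one dilation level at a time.

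The key mechanism, and the step I expect to be the main obstacle, is tracking how the transported dilation operator $\widehat D$ moves supports. Using the formula $\mathcal F\circ D^{-1}\circ\mathcal F^{\ast}(f)=m(\pi_0(\cdot))\,f\circ\sigma^{-1}$ recalled above, carried through $\Theta$, one has $\widehat D^{-1}(f)=m(z)\,(f\circ\widetilde\sigma^{-1})$; since $m$ vanishes only on a null set, multiplication by $m(z)$ preserves supports up to $\widetilde\tau$-null sets, while $f\mapsto f\circ\widetilde\sigma^{-1}$ shifts the support by $\widetilde\sigma$. Combined with the quasi-invariance of $\widetilde\tau$ (Proposition \ref{quasiinvariant} transported by $\Theta$), this shows that $\widehat D^{\,j}$ is a unitary carrying $L^2(E',\widetilde\tau)$ onto $L^2(\widetilde\sigma^{-j}(E'),\widetilde\tau)$. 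Under condition (1) the subspaces $\{L^2(\widetilde\sigma^{-j}(E'),\widetilde\tau)\}_{j\in\Z}$ are mutually orthogonal and their closed sum is all of $L^2(\widetilde\tau)$; since the level-$j$ vectors $\widehat D^{\,j}\widehat T_v(\lambda\chi_{E'})$ live in the $j$-th summand, the orthonormal-basis property decouples across $j$.

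With this decoupling in hand, the basis property reduces to the single assertion that $\{\widehat T_v(\lambda\chi_{E'}) : v\in\Z^d\}$ is an orthonormal basis of $L^2(E',\widetilde\tau)$, because the unitary $\widehat D^{\,j}$ carries the level-$0$ family to an orthonormal basis of the $j$-th summand exactly when the level-$0$ family is an orthonormal basis of $L^2(E',\widetilde\tau)$. I would then split this into orthonormality and completeness: orthonormality of $\{z^v\lambda\chi_{E'}\}$ is, by Lemma \ref{orthotrans} with $h\equiv1$, precisely condition (2), while completeness of their closed span in $L^2(E',\widetilde\tau)$ is verbatim condition (3), whose verifiability is further guaranteed by Proposition \ref{eventougher}. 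Assembling the two implications gives both directions: a generalized MSF wavelet forces (1) by definition and then (2), (3) by the level-$0$ reduction, while (1), (2), (3) rebuild the orthonormal basis level by level and supply the tiling demanded in Definition \ref{MSF}. The only genuinely nontrivial point is the support bookkeeping for $\widehat D$ in the middle paragraph; everything else is an orthogonal-direct-sum assembly.
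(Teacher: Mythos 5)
Your proof is correct and follows essentially the same route the paper intends: the paper states the corollary as an immediate consequence of Lemma \ref{orthotrans} (orthonormality of the translates $\Leftrightarrow$ condition (2)) together with the completeness condition (3) treated in Proposition \ref{eventougher}, with condition (1) being the tiling requirement built into Definition \ref{MSF}. Your middle paragraph --- the support bookkeeping showing that $\widehat{D}^{\,j}$ is a unitary from $L^2(E',\widetilde{\tau})$ onto $L^2(\widetilde{\sigma}^{-j}(E'),\widetilde{\tau})$, so that under (1) the orthonormal-basis property decouples across dilation levels and reduces to level $0$ --- is precisely the step the paper leaves implicit, and you supply it correctly.
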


We can rewrite condition (3) in Corollary \ref{corMSFtotal}
to obtain the following characterization of generalized MSF
wavelets. 

\begin{theorem}\label{MSFpossible} Let $E'\subset
\T^d\times \mathbb Z_A^d$ and suppose
that $\lambda:\T^d\times \mathbb Z_A^d\to \C$ is a Borel function.  Then $\lambda\cdot
\chi_{E'}$ is a generalized MSF wavelet in $L^2(\T^d\times
\mathbb Z_A^d,\widetilde{\tau})$
if and only if the following three conditions are satisfied:
 \begin{enumerate} \item [(1)]The sets $\{\widetilde{\sigma}^j(E'):
j\in\mathbb Z\}$ are pairwise disjoint up to sets of $\widetilde{\tau}$
and $\widetilde{\tau}(\T^d\times \mathbb Z_A^d\backslash \cup_{j\in\mathbb Z}\widetilde{\sigma}^j(E'))=0,$
 \item [(2)]$\int_{E'_z}|\lambda(z,(a_n))|^2d\nu_z=1,\;\text{a.e. }
z\in \T^d.$ \item [($3'$)]For almost all $z\in\mathbb T,$
$E'_z\;=\;\{({\bf a}_j)_{j=0}^{\infty}: (z,({\bf a}_j)_{j=0}^{\infty})\in
E'\}$ has a single atom with respect to $\nu_z,$ that is, for almost
all $z,$ there exists $({\bf a}_j)_{j=0}^{\infty}\in\; E'_z$ such that
$\nu_z(\{({\bf a}_j)_{j=0}^{\infty}\})=\;\nu_z(E'_z).$
 \end{enumerate}
\end{theorem}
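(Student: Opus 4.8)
The plan is to build on Corollary \ref{corMSFtotal}, which already shows that $\lambda\cdot\chi_{E'}$ is a generalized MSF wavelet precisely when conditions (1), (2), and (3) hold. Since conditions (1) and (2) here are identical to those of that corollary, it suffices to prove that, assuming (1) and (2), condition (3) is equivalent to condition ($3'$). In other words, the entire content of the theorem is the equivalence of the global spanning statement (3) with the fiberwise single-atom statement ($3'$).

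First I would pass to the direct integral description of $L^2(E',\widetilde\tau)$ furnished by Proposition \ref{taudecomp}, writing
$$L^2(E',\widetilde\tau)\;=\;\int_{\T^d}^{\oplus}L^2(E'_z,\nu_z)\,dz.$$
Under this decomposition the vector $\lambda\cdot\chi_{E'}$ corresponds to the measurable field $z\mapsto \xi_z$, where $\xi_z=\lambda(z,\cdot)\chi_{E'_z}\in L^2(E'_z,\nu_z)$, and condition (2) says exactly that $\|\xi_z\|_{L^2(E'_z,\nu_z)}^2=\int_{E'_z}|\lambda(z,(\mathbf a_n))|^2\,d\nu_z=1$ for almost every $z$; in particular $\xi_z\neq 0$ and $\nu_z(E'_z)>0$ a.e. Because multiplication by $z^v$ acts only on the torus variable, a finite combination $\sum_v c_v z^v\lambda\cdot\chi_{E'}$ restricts on the fiber over $z$ to $p(z)\,\xi_z$ with $p$ a trigonometric polynomial. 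Using $\|\xi_z\|\equiv 1$, the map $g\mapsto g\,\xi$ is an isometry of $L^2(\T^d)$ into $L^2(E',\widetilde\tau)$ and hence has closed range; since trigonometric polynomials are dense in $L^2(\T^d)$ I would conclude that
$$\overline{\text{span}}\{z^v\lambda\cdot\chi_{E'}:v\in\Z^d\}\;=\;\{g\,\xi:g\in L^2(\T^d)\},$$
which is precisely the subspace of those fields $f$ with $f_z\in\C\,\xi_z$ for almost every $z$.

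The key step is then a fiberwise dimension count. Condition (3) asserts that this subspace is all of $L^2(E',\widetilde\tau)$, and by the direct integral decomposition this holds if and only if $\C\,\xi_z=L^2(E'_z,\nu_z)$ for almost every $z$, i.e. (since $\xi_z\neq 0$) if and only if $\dim L^2(E'_z,\nu_z)=1$ for almost every $z$. I would then invoke the elementary measure-theoretic fact that, for the nonzero finite measure $\nu_z$ restricted to $E'_z$, the space $L^2(E'_z,\nu_z)$ is one-dimensional precisely when $\nu_z|_{E'_z}$ is concentrated on a single atom: if $E'_z$ contained two disjoint subsets of positive $\nu_z$-measure their indicator functions would be linearly independent, while a single point mass gives $L^2(E'_z,\nu_z)\cong\C$. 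Since ``$\nu_z|_{E'_z}$ is concentrated on a single atom'' is exactly the assertion of ($3'$) — the existence of $(\mathbf a_j)\in E'_z$ with $\nu_z(\{(\mathbf a_j)\})=\nu_z(E'_z)$ — this establishes (3)$\iff$($3'$) and hence the theorem.

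The main obstacle I anticipate is making these fiberwise statements rigorous within the direct integral framework: one must verify that $z\mapsto\xi_z$ is a genuine measurable field, that the closed span is correctly identified with $\{g\,\xi:g\in L^2(\T^d)\}$ rather than merely contained in it, and above all that the ``for almost every $z$'' quantifiers assemble correctly — for instance that the set of $z$ with $\dim L^2(E'_z,\nu_z)\ge 2$ is measurable, so that its being null is genuinely equivalent to the global spanning property. These are standard consequences of the theory of direct integrals of Hilbert spaces combined with the measurable structure of $\{\nu_z\}$ supplied by Proposition \ref{taudecomp}, but they are the points requiring care rather than the underlying algebra.
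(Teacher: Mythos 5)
Your proposal is correct and follows essentially the same route as the paper: reduce via Corollary \ref{corMSFtotal} to the equivalence of (3) with ($3'$), pass to the direct integral $L^2(E',\widetilde{\tau})\cong\int_{\T^d}^{\oplus}L^2(E'_z,\nu_z)\,dz$, identify the closed span of $\{z^v\lambda\cdot\chi_{E'}\}$ with the sections lying fiberwise in the line spanned by $\lambda(z,\cdot)\chi_{E'_z}$, and conclude by the fiberwise dimension count that density holds precisely when almost every fiber space is one-dimensional, i.e. when $\nu_z|_{E'_z}$ is a point mass. The only cosmetic difference is that the paper grounds the measurability and decomposition issues you flag by invoking Mackey's theorem on direct integrals over a quotient (Theorem 2.11 of \cite{mack}), whereas you handle the identification by the isometry $g\mapsto g\,\xi$; the substance is the same.
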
 

\begin{proof} Assume that $\lambda \cdot \chi_{E'}$ is
a generalized MSF wavelet. By Corollary \ref{corMSFtotal}, we can assume
that conditions (1), (2)  and (3) hold for $E'.$  This setting is a
special case of a theorem of G. W. Mackey (Theorem 2.11 of \cite{mack})
about a direct integral of Hilbert spaces relative to a measure
that has a decomposition over a quotient space.  Mackey's theorem says that the
original direct integral is naturally isomorphic to the direct integral
over the quotient space of the direct integrals over the fibers.
In our case, the original Hilbert spaces are all equal to $\mathbb C$,
and the measure $\widetilde{\tau}$ is decomposed over $\mathbb T^d$ relative to
the Haar measure on $\mathbb T$, with fiber measures $\nu_z$ as in 
Proposition 2.4.  For each $z$ let $\H_z$ be $L^2(E',\nu_z).$  Then
Mackey's theorem assures us that the spaces $H_z$ form an integrable
bundle and that $$L^2(\widetilde{\tau}) \cong \int^\oplus H_z dz,$$ in such a way
that the class of a bounded Borel function $f$ in $L^2(\tau)$ corresponds to the
class of the section of the bundle over $\mathbb T^d$ whose value at each
$z$ is the class of $f$ in $H_z$.  
This works because the bounded Borel functions are square-integrable relative 
to every finite measure, providing a dense subspace
 in
every one of the Hilbert spaces involved.

In particular, the function
$\chi_{E'}$ corresponds in this particular way to a section $\psi$ of
the Hilbert bundle over $\mathbb T^d$ whose fibers are the spaces $H_z$. 
Multiplying $\psi$ by Laurent polynomials in $z$ times $\lambda(z)$ gives
a subspace of the sections of the bundle that is $L^2$-dense because
the corresponding functions on $E'$ are dense.  Hence, for almost every
$z$ the one-dimensional space spanned by $\psi(z)$ is dense in $H_z$.
Thus almost every $H_z$ is one dimensional, which implies that $\nu_z$
is a point mass for almost every $z$.
 
Conversely, if (1) and (2) hold
and almost every $\nu_z$ is a point mass, we have that almost every $H_z$ is one dimensional.
Thus the space of multiples of $\psi$ by Laurent polynomials times
$\lambda$ is dense in the space of $L^2$ sections. This implies that
the same multiples of $\chi_{E'}$ are dense in $L^2(\widetilde{\tau})$.
Thus, condition (3) holds, and by Corollary \ref{corMSFtotal} we have that
$\lambda\cdot
\chi_{E'}$ is a generalized MSF wavelet.
 \end{proof}

As an immediate consequence of the above theorem, we see that the MRA corresponding to the inflated Cantor fractal set and the inflated Sierpinski fractal set cannot have generalized MSF single wavelets:

\begin{corollary}
\label{nofractalMSF}
Let  $L^2(X,\mu)$ be a measure space carrying a translation by $\Z^d$ and dilation by the diagonal dilation matrix $A,$ and suppose that $L^2(X,\mu)$ has a MRA corresponding to a single scaling function $\phi$ and a finite wavelet family $\{\psi_i\}_{i=1}^{N-1}\subset L^2(X,\mu),$ with corresponding {\bf continuous}``low-pass" filter function $m:\T^d\to \C.$   If  $|m(z)|<\sqrt{N},\;\forall z\in\mathbb T^d,$  then $L^2(X,\mu)$ does not have a MSF wavelet corresponding to the translation and dilation operators. 
\end{corollary}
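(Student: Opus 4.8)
The plan is to reduce the nonexistence of an MSF wavelet to a statement about the atoms of the fiber measures $\nu_z$, and then to rule those atoms out using the hypothesis $|m(z)|<\sqrt N$ together with the compactness of $\T^d$. First I would observe that if $\psi\in L^2(X,\mu)$ were a generalized MSF wavelet in the sense of Definition \ref{MSF}, then its image $\widehat\psi={\mathcal F}(\psi)$ has the form $\lambda\cdot\chi_E$ with $E\subset\CS_{\beta}$ satisfying the tiling conditions with respect to the shift $\sigma.$ Transporting through the Borel isomorphism $\Theta$ of Proposition \ref{taudecomp}, the set $E'=\Theta^{-1}(E)$ and the transported function $\lambda$ satisfy the three conditions of Theorem \ref{MSFpossible} in $L^2(\T^d\times\mathbb Z_A^d,\widetilde{\tau}).$ In particular, condition $(3')$ forces, for almost every $z\in\T^d,$ the fiber $E'_z$ to contain a single atom of $\nu_z;$ this in turn requires that $\nu_z$ possess at least one atom for almost every $z.$

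Next I would invoke Lemma \ref{lemmaatoms}: the measure $\nu_z$ has an atom precisely when there is a sequence $({\bf a}_j)_{j=0}^{\infty}\in\mathbb Z_A^d$ along which $m$ never vanishes and along which the quantities $\frac1{\sqrt N}\,|m(\text{e}(A^{-n}({\bf t}))\cdot \text{e}(A^{-n}(\sum_{i=0}^{n-1}A^i({\bf a}_i))))|$ tend to $1$ rapidly enough for the infinite product $\prod_{j=1}^{\infty}\frac1{\sqrt N}|m(\cdots)|$ to converge to a positive number. Thus the existence of an MSF wavelet would entail that such a convergent product exists for almost every $z.$

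The crux is then to show this is impossible under the stated hypotheses. Since $m$ is continuous on the compact group $\T^d$ and $|m(z)|<\sqrt N$ for every $z,$ the supremum $M=\sup_{z\in\T^d}|m(z)|$ is attained and satisfies $M<\sqrt N.$ Consequently every factor obeys $\frac1{\sqrt N}|m(\cdots)|\le M/\sqrt N<1,$ a bound uniform in the argument and hence uniform in $z$ and in the choice of $({\bf a}_j).$ Therefore
$$\prod_{j=1}^{\infty}\frac1{\sqrt N}\,|m(\cdots)|\ \le\ \prod_{j=1}^{\infty}\frac{M}{\sqrt N}\ =\ 0,$$
so no such product can converge to a positive number. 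By Lemma \ref{lemmaatoms} the measure $\nu_z$ has no atoms, for every $z\in\T^d.$ This contradicts the requirement extracted from condition $(3')$ of Theorem \ref{MSFpossible}, and hence no generalized MSF wavelet can exist in $L^2(X,\mu).$

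The main obstacle is conceptual rather than computational: one must carefully track the chain of identifications — the modified Fourier transform ${\mathcal F}:L^2(X,\mu)\to L^2(\CS_{\beta},\tau)$ and the isomorphism $\Theta:\T^d\times\mathbb Z_A^d\to\CS_{\beta}$ — to be certain that the MSF condition in $L^2(X,\mu)$ translates exactly into condition $(3')$ about the fiber measures $\nu_z.$ Once that reduction is in place, the quantitative step is immediate, since compactness upgrades the pointwise strict inequality $|m|<\sqrt N$ to a uniform one and forces the relevant infinite product to vanish.
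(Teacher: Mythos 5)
Your proposal is correct and follows essentially the same route as the paper, which likewise deduces from Lemma \ref{lemmaatoms} that the hypothesis $|m(z)|<\sqrt{N}$ on all of $\T^d$ rules out atoms in every fiber measure $\nu_z$, and then invokes Theorem \ref{MSFpossible} to exclude MSF wavelets. The only difference is that you spell out the compactness step (upgrading the pointwise strict inequality to a uniform bound $M/\sqrt{N}<1$ so the infinite product vanishes), which the paper's two-line proof leaves implicit.
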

\begin{proof}
Since $|m(z)|<\sqrt{N},\;\forall\;z\;\in\;\mathbb T^d,$ the fiber measures $\nu_z$ will never have atoms, by Lemma \ref{lemmaatoms}.
By Theorem \ref{MSFpossible}, we see that $L^2(X,\mu)$ will not have MSF wavelets.
\end{proof}

\begin{remark}
\label{fracsingwave}
{\rm
Corollary \ref{nofractalMSF} does not rule out the existence of a single wavelet in the inflated fractal spaces corresponding to the Cantor set or Sierpinski gasket.  It just states that if a single wavelet does exist, it cannot be a generalized MSF wavelet as defined above.
}
\end{remark}

\begin{remark}
\label{clswvset}
{\rm
Let us consider Proposition \ref{wvsetsolen} in the context of Proposition \ref{decompclas}, where we have a classical low-pass filter giving rise to a classical MRA.  Let $N=2$ and $m(z)=\frac{1+z}{\sqrt{2}},$ which is the filter corresponding to the Haar wavelet.
Recall that a standard computation shows that in this case, the Fourier transform of the scaling function $\phi$ is given by 
$$\widehat{\phi}(t)\;=\;\frac{e^{2\pi it}-1}{2\pi it}.$$ 
In this case, we consider the Shannon wavelet set $W=[-1,-\frac{1}{2})\cup [\frac{1}{2},1)\;\subset\;\R.$  
The winding line map from $\R$ to $\CS_2$ is given by 
$$w(t)\;=\;(e^{2\pi i t},e^{\frac{2\pi i t}{2}},\;e^{\frac{2\pi i t}{2^2}},\cdots, e^{\frac{2\pi i t}{2^n}},\cdots, ).$$
The image of $W$ under the winding line map is $E_{-}\cup E_{+},$ where 
$$E_-\;=\;\{(e^{2\pi i t},e^{\frac{2\pi i t}{2}},\;e^{\frac{2\pi i t}{2^2}},\cdots, e^{\frac{2\pi i t}{2^n}},\cdots, ):\;t\in[-1,-\frac{1}{2})\}$$
and 
$$E_+\;=\;\{(e^{2\pi i t},e^{\frac{2\pi i t}{2}},\;e^{\frac{2\pi i t}{2^2}},\cdots, e^{\frac{2\pi i t}{2^n}},\cdots, ):\;t\in[\frac{1}{2},1)\}.$$

We now construct $E'\;=\;\Theta^{-1}(E_{-}\cup E_{+})\subset \;\T\times \prod_{j=0}^{\infty}\{0,1\},$
where one can compute that 
$$\Theta^{-1}((z_n)_{n=0}^{\infty})\;=\;(z_0,(z_n\cdot [c(z_0)_n]^{-1})_{n=0}^{\infty}).$$

Note that for $t\in [-1,-\frac{1}{2}),$
$$c(e^{2\pi i t})\;=\;(e^{2\pi i t},e^{\frac{2\pi i (t+1)}{2}},\cdots, e^{\frac{2\pi i (t+1)}{2^n}},\cdots, ),$$
and for $t\in [\frac{1}{2},1),$
$$c(e^{2\pi i t})\;=\;(e^{2\pi i t},e^{\frac{2\pi i (t-1)}{2}},\cdots, e^{\frac{2\pi i (t-1)}{2^n}},\cdots, ),$$
It follows that for $t\in [-1,-\frac{1}{2}),$
$$\Theta^{-1}\circ c(e^{2\pi i t})\;= (e^{2\pi i t},e^{2\pi i -\frac{1}{2}},\cdots,e^{2\pi i -\frac{1}{2^n}},\cdots,),$$
and for $t\in [\frac{1}{2},1),$  
$$\Theta^{-1}\circ c(e^{2\pi i t})\;= (e^{2\pi i t},e^{2\pi i \frac{1}{2}},\cdots,e^{2\pi i \frac{1}{2^n}},\cdots,).$$
This shows us that for $t\in [-1,-\frac{1}{2}),$
$$\Theta^{-1}\circ c(e^{2\pi i t})\;=\;(e^{2\pi i t},a_0=1=2-1,a_1=1,\cdots a_n=1,\cdots)$$
and for $t\in [\frac{1}{2},1),$
$$\Theta^{-1}\circ c(e^{2\pi i t})\;=\;(e^{2\pi i t},a_0=1,a_1=0,\cdots a_n=0,\cdots).$$
Thus $E'=\;E'_+\;\cup\;E'_-,$ for 
$$E'_-\;=\; \{(e^{2\pi i t},a_0=1=2-1,a_1=1,\cdots a_n=1,\cdots):\;t\in [-1,-\frac{1}{2})\}$$ and 
$$E'_+\;=\;\{(e^{2\pi i t},a_0=1,a_1=0,\cdots a_n=0,\cdots):\;t\in [\frac{1}{2},1)\}.$$

By Proposition \ref{decompclas}, for $t\in [-\frac{1}{2},0),$
$$\nu_{e^{2\pi i t}}(\pi_0^{-1}(e^{2\pi i t})\cap E')\;=\;|\widehat{\phi}(t+1)|^2,$$
and for $t\in [0,\frac{1}{2}),$
$$\nu_{e^{2\pi i t}}(\pi_0^{-1}(e^{2\pi i t})\cap E')\;=\;|\widehat{\phi}(t-1)|^2,$$

We deduce that defining the standard cross-section map
$\theta:\T\to \R$ such that $\theta(z)=t$ for $t\in [-\frac{1}{2},\frac{1}{2})$ with $z=e^{2\pi i t},$ and define 
$\lambda:\T\to \R$ by 
$$\lambda(z)=\;\frac{1}{\widehat{\phi}(\theta(z)+1)},\;\theta(z)\;\in\;[-\frac{1}{2},0),$$
and 
$$\lambda(z)=\;\frac{1}{\widehat{\phi}(\theta(z)-1)},\;\theta(z)\;\in\;[0,\frac{1}{2}),$$
the function $\lambda(z)\cdot \chi_{E'}$ is the image in $L^2(\T\times \prod_{j=0}^{\infty}\{0,1\}_j, \widetilde{\tau})$
of an MSF wavelet in $L^2(\R)$ for dilation by $2.$  This of course comes as no surprise, since we constructed both $\lambda(z)$ and $E'$ directly from the Shannon wavelet set.

}
\end{remark}


\section{The wavelet representation of the Baumslag-Solitar group, the measure $\tau,$ on $\CS_{\beta}$, and induced representations} 
\label{BS}

We now relate the existence of generalized MSF wavelets to certain properties of the associated representation of the 
Baumslag-Solitar group.  Recall that for $N\geq 2,$ the classical Baumslag-Solitar group $BS_N$ has two generators $a,\;b,$ satisfying the relations 
$$aba^{-1}=b^N.$$ 
The discrete group $BS_N$ can also be written as a semidirect product $\mathbb Q_N\rtimes_{\theta}\mathbb Z$ where $\theta(q)=Nq,$ for $q\in \mathbb Q_N,$ the $N$-adic rationals. We generalize the Baumslag-Solitar group to diagonal $d\times d$ dilation matrices $A$ with diagonal entries $N_1,\;N_2,\;\cdots,\;N_d$ all integers $\geq 2.$  Recall we consider the $A$-adic rationals $\mathbb Q_A\;=\;\cup_{j=0}^{\infty}A^{-j}[\mathbb Q^d],$ and defining $\theta:\mathbb Q_A\to  \mathbb Q_A$ by $\theta({\bf q})=A({\bf q}),$ we can parametrize $\mathbb Q_A\rtimes_{\theta}\mathbb Z$ as the set of pairs $\{({\bf q},m): {\bf q}\in \mathbb Q_A,\;m\in\mathbb Z\},$ where 
the group operation is given by 
$$({\bf q}_1,m_1)\;\cdot\;({\bf q}_2,m_2)\;=\;({\bf q}_1+A^{-m_1}({\bf q}_2), m_1+m_2).$$
Denote this group by $BS_A.$
If $X\subset \mathbb R^d$ is a subset invariant under integral translation and dilation by $A,$ and $\mu$ is a $\sigma$-finite Borel measure on $X$ that is invariant under translation by the integers and quasi-invariant via the constant $K>0$ under dilation by $A$, so that $\mu(A(S))\;=\;K\mu(S)$ for all Borel subsets $S$ of $X,$ there is a related representation of $BS_A$ on $L^2(X,\mu),$ called a {\bf wavelet representation} of $BS_A,$ defined on $L^2(X,\mu)$ by 
$$W(({\bf q},m))(f)(x)\;=\;(\sqrt{N})^mf(A^m({\bf x}-{\bf q})),\;f\in\;L^2(X,\mu).$$
 We remark that for $d=1,$ the generator $a\in BS_N$ corresponds to $(1,0)$ and the generator $b$ corresponds to $(0,-1)\;\in \;\mathbb Q_N\rtimes_{\theta}\mathbb Z.$

Let $m$ be a quadrature mirror filter on $\mathbb T^d$ with respect to dilation by $A$ such that the Haar measure of $m^{-1}(\{0\})$ is equal to $0.$ Usually the filter $m$ will be a polynomial filter coming from the self-similarity relation satisfied by the generating subset of $X\subset\R^d.$ In earlier sections, we studied the structure of the measure $\tau$ on $\mathcal S_{\beta}$ such that the wavelet representation of $BS_A$ on $L^2(X,\mu)$ is equivalent to the representation on $L^2(\mathcal S_{\beta},\tau)$ 
defined by  
$$W(v)(f)(z_n)_{n=0}^{\infty})\;=\;\widehat{T_v}(f)((z_n)_{n=0}^{\infty})\;=\;(z_0)^v\cdot\; f((z_n)_{n=0}^{\infty}),\;f\in L^2(\mathcal S_{\beta},\tau),\;v\in\;\mathbb Z^d,$$
and
$$W(b)(f)((z_n)_{n=0}^{\infty})\;=\;\widehat{D}^{-1}(f)((z_n)_{n=0}^{\infty})\;=\;m(z_0)\cdot f\circ \sigma^{-1}((z_n)_{n=0}^{\infty}).$$
For simplicity of notation we have used $W$ to denote both the (equivalent) representations of $BS_Arks
$
on $L^2(X,\mu)$ and  $L^2(\mathcal S_{\beta},\tau).$ 

We now suppose that a wavelet representation of the above form, either on $L^2(\CS_{\beta},\tau)$ or on $L^2(X,\mu),$ has a {\bf single} orthonormal wavelet, i.e. suppose there exists $\psi\in\;L^2(X,\mu)$ such that 
$\{D^jT_v(\psi):\;j\;\in\;\mathbb Z,\;v\in\;\mathbb Z^d,\}$ is an orthonormal basis for $L^2(X,\mu).$  Our aim is to characterize when $\psi$ is a generalized MSF frequency wavelet in terms of properties of the associated wavelet representation.

The following theorem is related to a theorem from Eric Weber's 1999 CU Ph.D. thesis \cite{web}, which gave necessary and sufficient conditions for a single wavelet for dilation by $N$ in $L^2(\mathbb R)$ to be an MSF wavelet, in terms of the invariance of the wavelet subspace $W_0\subset L^2(\mathbb R)$ under translation by $\mathbb R.$  In our more general case, the space $X\;\subset \;\mathbb R^d$ does not necessary carry an action by translation by all of $\mathbb R^d,$ but only by the group $\mathbb Q_A.$ As before, ${\mathcal F}:L^2(X,\mu)\to L^2(\CS_{\beta},\tau)$ represents the generalized Fourier transform of Dutkay from the Hilbert space associated to the (possibly fractal) space $X$ to the Hilbert space associated to the solenoid.

\begin{theorem} \label{MSFsolenoid}
Let $g$ be an orthonormal wavelet in $L^2(\CS_{\beta},\tau)$ corresponding to dilation by $\widehat{D}$ and translation operators $\widehat{T_v},\;v\in\mathbb Z^d.$  Then, the following are equivalent:
\begin{enumerate}
\item[i)] ${\mathcal F}^{-1}(g)$ is a generalized MSF wavelet in the sense of Definition \ref{MSF},
\item[ii)] For each $j\in \mathbb Z,$ the set ${\mathcal W}_j=\overline{\text{span}}\{\widehat{D}^j\widehat{T_v}(g);\; v\in\mathbb Z^d\}$ is invariant under the operators $\{\widehat{T}_{{\bf q}}\}$ for all $A$-adic rational numbers ${\bf q}\in\mathbb Q_A.$
\end{enumerate}
\end{theorem}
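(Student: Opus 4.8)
The plan is to recast both conditions as statements about multiplication operators on $L^2(\CS_{\beta},\tau)$, exploiting the fact already used in the proof of Proposition \ref{eventougher} that for ${\bf q}=A^{-l}(v)\in\mathbb Q_A$ the operator $\widehat{T_{{\bf q}}}=\widehat{D}^l\widehat{T_v}\widehat{D}^{-l}$ acts as multiplication by the character $(z_n)\mapsto z_l^v$ of $\CS_{\beta}$. Since $\widehat{\CS_{\beta}}\cong\mathbb Q_A$, the family $\{\widehat{T_{{\bf q}}}:{\bf q}\in\mathbb Q_A\}$ is exactly the family of multiplication operators by all characters of $\CS_{\beta}$. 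By Stone--Weierstrass their linear span is uniformly dense in $C(\CS_{\beta})$, and $\{M_h:h\in C(\CS_{\beta})\}$ is weak-operator dense in $\{M_g:g\in L^\infty(\CS_{\beta},\tau)\}$. Because the characters are closed under conjugation, a closed subspace of $L^2(\CS_{\beta},\tau)$ is invariant under every $\widehat{T_{{\bf q}}}$ if and only if it is a reducing subspace for all of $L^\infty(\CS_{\beta},\tau)$; as $L^\infty$ is a maximal abelian algebra, such a subspace is precisely $L^2(F,\tau)$ for some Borel set $F\subseteq\CS_{\beta}$. This single structural fact drives the whole argument, and it is the abstract analogue of the ``all of $\mathbb R$'' translation invariance in Weber's theorem.

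For i) $\Rightarrow$ ii): if ${\mathcal F}^{-1}(g)$ is a generalized MSF wavelet then $g=\lambda\cdot\chi_E$, and by condition (3) of Corollary \ref{corMSFtotal} (after identifying $\CS_{\beta}$ with $\T^d\times\mathbb Z_A^d$ via $\Theta$) we have $\mathcal{W}_0=\overline{\text{span}}\{\widehat{T_v}(g):v\in\mathbb Z^d\}=L^2(E,\tau)$. From $\widehat{D}^{-1}(f)=m(z_0)\cdot f\circ\sigma^{-1}$ together with the hypothesis that $m$ vanishes only on a null set, one checks that $\widehat{D}$ carries $L^2(F,\tau)$ onto $L^2(\sigma^{-1}(F),\tau)$ for every Borel $F$, whence $\mathcal{W}_j=\widehat{D}^j\mathcal{W}_0=L^2(\sigma^{-j}(E),\tau)$. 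Each such space is visibly invariant under multiplication by any character, hence under every $\widehat{T_{{\bf q}}}$, giving ii).

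For ii) $\Rightarrow$ i): invariance of $\mathcal{W}_0$ under all $\widehat{T_{{\bf q}}}$ yields, by the structural fact above, $\mathcal{W}_0=L^2(E,\tau)$ for some Borel $E$; since $g\in\mathcal{W}_0$ while every $\widehat{T_v}(g)=z_0^v g$ is supported on $\supp(g)$, one identifies $E=\supp(g)$ and writes $g=\lambda\cdot\chi_E$ with $\lambda=g$. This is condition (3) of Corollary \ref{corMSFtotal}, while condition (2) is the assumed orthonormality of $\{\widehat{T_v}(g)\}$ recast through Lemma \ref{orthotrans}. For condition (1) I would again invoke $\mathcal{W}_j=L^2(\sigma^{-j}(E),\tau)$: mutual orthogonality of the wavelet subspaces forces $\tau(\sigma^{-j}(E)\cap\sigma^{-k}(E))=0$ for $j\neq k$, and completeness of $\{\widehat{D}^j\widehat{T_v}(g)\}$ as an orthonormal basis forces $\tau(\CS_{\beta}\setminus\bigcup_{j}\sigma^{-j}(E))=0$; since $\{\sigma^{-j}(E)\}_{j}=\{\sigma^{j}(E)\}_{j}$, these are exactly the tiling requirements of Definition \ref{MSF}. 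Corollary \ref{corMSFtotal} then returns i).

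The main obstacle is the structural step in ii) $\Rightarrow$ i): upgrading invariance under the discrete family $\{\widehat{T_{{\bf q}}}\}$ to invariance under all of $L^\infty(\CS_{\beta},\tau)$ and thereby concluding $\mathcal{W}_0=L^2(E,\tau)$. This is precisely where the density arguments must be deployed with care, namely Stone--Weierstrass to pass from characters to $C(\CS_{\beta})$ and weak-operator approximation to pass from $C(\CS_{\beta})$ to $L^\infty(\CS_{\beta},\tau)$, exactly as in the proof of Proposition \ref{eventougher}. Everything else is bookkeeping with the support-shifting action of $\widehat{D}$ and the already-established translation of the MSF conditions into the language of the fiber measures $\nu_z$.
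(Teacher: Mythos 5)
Your proof is correct, and it shares the paper's overall skeleton — both directions turn on identifying each ${\mathcal W}_j$ with $L^2(\sigma^{-j}(E),\tau)$, and both use the same density ingredients (Pontryagin duality $\widehat{\CS_{\beta}}\cong\Q_A$, Stone--Weierstrass, weak-operator density of $C(\CS_{\beta})$ in $L^{\infty}(\CS_{\beta},\tau)$) — but you close the crucial implication ii) $\Rightarrow$ i) by a genuinely different, more operator-algebraic route. The paper works directly with the generator $g$: from invariance it deduces $h\cdot g\in{\mathcal W}_0$ for every $h\in L^{\infty}(\CS_{\beta},\tau)$, extracts $|g|\in{\mathcal W}_0$ via the unimodular function $u$ with $u\cdot g=|g|$, and then runs an explicit truncation argument over the level sets $E_n=\{\frac{1}{n-1}\geq |g|>\frac{1}{n}\}$ to show that every bounded function supported in $\supp(g)$ lies in ${\mathcal W}_0$, concluding ${\mathcal W}_0=L^2(\supp(g),\tau)$. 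You instead observe that the characters are closed under conjugation, so ${\mathcal W}_0$ is a reducing subspace for the von Neumann algebra they generate, which is all of $L^{\infty}(\CS_{\beta},\tau)$ acting by multiplication; since that algebra is maximal abelian, the projection onto ${\mathcal W}_0$ is itself multiplication by some $\chi_E$, giving ${\mathcal W}_0=L^2(E,\tau)$ in one stroke and bypassing the division-by-$g$ estimates entirely. What the MASA argument buys is exactly this cleanliness, concentrating the analysis in one standard structural fact; what the paper's hands-on argument buys is independence from commutant machinery — it never needs maximal abelianness, only that multiplication operators can be approximated. One further point in your favor: Definition \ref{MSF} also demands the tiling conditions $\tau(\sigma^j(E)\cap\sigma^k(E))=0$ for $j\neq k$ and $\tau(\CS_{\beta}\setminus\bigcup_{j}\sigma^j(E))=0$, which you derive explicitly from the mutual orthogonality and completeness of the subspaces ${\mathcal W}_j=L^2(\sigma^{-j}(E),\tau)$; the paper's written proof of ii) $\Rightarrow$ i) stops once it has ${\mathcal W}_0=L^2(E,\tau)$ and $g=\lambda\cdot\chi_E$, leaving that verification implicit, so your version is slightly more complete on this point.
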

\begin{proof}
i) implies ii):  Suppose that $\psi\;= \;{\mathcal F}^{-1}(g)$ is a generalized MSF wavelet in the sense of Definition \ref{MSF}.  This means that $g((z_n)_{n=0}^{\infty})\;=\; \lambda(z_0)\chi_E((z_n)_{n=0}^{\infty})$ where the sets $\{\sigma^j(E): j\in\mathbb Z\}$ are pairwise disjoint up to sets of $\tau$-measure $0$ and $\tau(\CS_{\beta}\backslash \cup_{j\in\mathbb Z}\sigma^j(E))=0,$ with $\lambda:\mathbb T^d\to \mathbb C$ satisfying $\int_{\mathbb T^d}|\lambda(z)|^2\nu_z(E_z)dz\;=\;1.$ We now show that ${\mathcal W}_0\;=\;\overline{\text{span}}\{\widehat{T_v}(g);\; v\in\mathbb Z^d\}$ is invariant under the operators $\{\widehat{T_{{\bf q}}}:\;{\bf q}\in \mathbb Q_A\}.$  We note that by definition, for ${\bf q}=A^{-j}(v),\;v\in\;\mathbb Z^d,\;j\in\;\mathbb N\cup \{0\},$
$$\widehat{T_{{\bf q}}}(f)((z_n)_{n=0}^{\infty})\;=\;\langle {\bf q},(z_n)_{n=0}^{\infty}\rangle f((z_n)_{n=0}^{\infty})\;=\;(z_j)^v f((z_n)_{n=0}^{\infty}),\;f\in L^2(\CS_{\beta},\tau),\;v\in\;\Z^d.$$
To say that $f\in {\mathcal W}_0$ is equivalent to saying that $f$ vanishes almost everywhere off of $E.$ But if $f$ vanishes almost everywhere off of $E,$ it is clear that $\widehat{T_{{\bf q}}}(f)= (z_j)^v\cdot f$ vanishes almost everywhere off of $E,$ so that $\widehat{T_{{\bf q}}}(f)\in {\mathcal W}_0.$ Thus ${\mathcal W}_0$ is invariant under $\widehat{T_{{\bf q}}},\;\forall\; {\bf q}\in \mathbb Q_A.$  In a similar fashion, assuming that i) holds, $f\in {\mathcal W}_j$ if and only if $f$ vanishes $\tau$ almost everywhere off of $\sigma^j(E).$ But if $f$ vanishes $\tau$ almost everywhere off of $\sigma^j(E),\;\widehat{T_{{\bf q}}}(f)= (z_j)^m\cdot f$ will vanish $\tau$-almost everywhere off of $\sigma^j(E).$  Thus ${\mathcal W}_j$ is invariant under $\widehat{T_{\alpha}},$ and since ${\bf q}\in \mathbb Q_A$ was chosen arbitrarily, we see that ${\mathcal W}_j$ is invariant under $\widehat{T_{{\bf q}}},\;\forall\; {\bf q}\in \mathbb Q_A.$

ii) implies i): Let $\psi \;=\;{\mathcal F}^{-1}(g)$ be an orthonormal wavelet in $L^2(X,\nu),$ so that $\{T^k(\psi):\;k\in \mathbb Z\}$ forms an orthonormal set.  Then $\{\widehat{T}^k(g):\;k\in \mathbb Z\}$ is an orthonormal set in $L^2(\CS_{\beta},\tau),$ and by hypothesis, ${\mathcal W}_0\;=\;\overline{\text{span}}\{\widehat{T_v}(g):\;v\in \mathbb Z^d\}$ is invariant under $\widehat{T_{{\bf q}}},\;\forall\; {\bf q}\in \mathbb Q_A.$  Let $E=\text{supp}(g).$  We want to show that $\overline{\text{span}}\{\widehat{T_v}(g):\;v\in \mathbb Z^d\}\;=\;L^2(E,\tau).$ By the formula giving the operators $\widehat{T_v},$ for $v\in \mathbb Z^d,$ it is certainly true that 
$$\overline{\text{span}}\{\widehat{T_v}(g):\;v\in \mathbb Z^d\}\;\subset\;L^2(E,\tau).$$  To show that $\overline{\text{span}}\{\widehat{T_v}(g):\;v\in \mathbb Z^d\}\;=\;L^2(E,\tau),$ it suffices to show that whenever $f\in L^2(E,\tau)$ is a compactly supported simple function with support $F$ lying in $E,$ then $f\in\overline{\text{span}}\{\widehat{T_v}(g):\;v\in \mathbb Z^d\}.$  Since  for any fixed $j\in\mathbb Z,$ we know that ${\mathcal W_j}$ is invariant under 
$\widehat{T_v}\;\forall v\in \mathbb Z^d,$  it follows that $\widehat{T_{{\bf q}}}(g)\in \overline{\text{span}}\{\widehat{T_v}(g):\;v\in \mathbb Z^d\},$ for every ${\bf q}\in \mathbb Q_A\;=\widehat{\CS_{\beta}}.$  Since finite linear combinations of characters are norm-dense in $C(\CS_{\beta}),$ we obtain that $p((z_n))\cdot g((z_n))\in \overline{\text{span}}\{\widehat{T_v}(g):\;v\in \mathbb Z^d\},$ for any continuous function $p$ defined on $\CS_{\beta}.$  From this we deduce that if $q\in L^{\infty}(\CS_{\beta},\tau)$ is an essentially bounded function defined on $\CS_{\beta},$ then $q\cdot g$ is in $\overline{\text{span}}\{\widehat{T_v}(g):\;v\in \mathbb Z^d\}.$ Let $u\in L^{\infty}(\CS_{\beta},\tau)$ be the unique function of modulus $1$ defined on $\CS_{\beta}$ such that $u\cdot g\;=\;|g|.$  It follows that $|g|\in \overline{\text{span}}\{\widehat{T_v}(g):\;v\in \mathbb Z^d\}.$

For $n=1,$ let $E_1=\{(z_n)\in E: |g((z_n))|>1\},$ for $n\geq 2,$ let $E_n\;=\;\{(z_n)\in E: \frac{1}{n-1} \geq |g((z_n))| > \frac{1}{n}\}$, and define $F_n = F \cap E_n$. Note that the sets $\{F_n\}$ are pairwise disjoint and their union is equal to $F.$ Let $K=\text{sup}|f((z_n))|.$  Fix $\epsilon>0,$ and find $N$ such that  $\tau(\cup_{n>N}F_n)<\frac{\epsilon}{K}.$ Note that $\frac{1}{g}$ is bounded on $\cup_{n \leq N} F_n,$ since $|\frac{1}{g}|=\frac{1}{|g|},$ which is strictly less than $N$ on $\cup_{n \leq N}.$ Let $f_N\;=\;\frac{f}{g}\cdot \chi_{\cup_{n \leq N} F_n}.$   Then since $f$ is bounded, $f_0$ is bounded as well, and by our earlier remarks, $f_N\cdot g$ is an element of $\overline{\text{span}}\{\widehat{T}^k(g):\;k\in \mathbb Z\}.$  One calculates $f_N\cdot g\;=\; f\cdot \chi_{\cup_{n \leq N} F_n}.$  We see that $f_N\cdot g$ is equal to $f$ on $\cup_{n \leq N} F_n,$ so that 
$$\|f_N\cdot g-f\|\;=\;\|f \cdot\chi_{\cup_{n > N} F_n}\|\leq K\cdot\|\chi_{\cup_{n > N} F_n}\|$$
$$=\;K\cdot (\tau(\cup_{n>N}F_n))<\;\epsilon.$$
Since $\overline{\text{span}}\{\widehat{T_v}(g):\;v\in \mathbb Z^d\}$ is closed in $L^2(\CS_{\beta},\tau),$ we obtain that $f\;\in\;\overline{\text{span}}\{\widehat{T_v}(g):\;v\in \mathbb Z^d\}.$   Since the set of all such $f$'s is dense in $L^2(E,\tau),$ we obtain that 
$L^2(E,\tau)\;\subset\;\overline{\text{span}}\{\widehat{T_v}(g):\;v\in \mathbb Z^d\}.$  It follows that 
$$L^2(E,\tau)\;=\;\overline{\text{span}}\{\widehat{T_v}(g):\;v\in \mathbb Z^d\},$$ and $g=\lambda\cdot \chi_E,$ for $\lambda\in L^2(\CS_{\beta},\tau)$ so that ${\mathcal F}^{-1}(g)$ is a generalized MSF wavelet, as desired.
\end{proof}

Next we will show that in the case where one can find a single wavelet $\psi\in L^2(X,\;\mu),$ whether or not $\psi$ can be classified as a generalized MSF wavelet is characterized by whether the wavelet subspaces determined by $\psi$ give the wavelet representation  the structure of an induced representation.   First, we consider the case where there exists a subset $C$ of $\mathcal S_{\beta}\;=\;\widehat{\mathbb Q_A}$ such that $\text{Ind}_{\mathbb Q_A}^{BS_A}\int^{\oplus}_{C}\gamma d\tau$ is equivalent to the wavelet representation $W.$  In this case, the Imprimitivity Theorem of G. Mackey shows that if $W$ is induced from a representation $U$ of $\mathbb Q_A$ on the Hilbert space ${\mathcal L},$ there must be a projection-valued measure from $BS_A/(\mathbb Q_A)$ to $L^2(\CS_{\beta}, \tau)$ such that 
$$W_{({\bf q},k)}^{-1}P_E W_{({\bf q},k)}\;=\;P_{({\bf q},k)^{-1}\cdot E}.$$
Since $BS_A/(\mathbb Q_A)\cong \mathbb Z,$ any projection valued measure on $BS_N$ amounts to a orthogonal decomposition of $L^2(\CS_{\beta},\tau)$ into infinitely many closed subspaces:
$$L^2(\CS_{\beta},\tau)\;=\;\oplus_{n\in\mathbb Z} {\mathcal W}_n.$$

In particular, setting $P_{\{n\}}=P_{{\mathcal W}_n}:=P_n,$ so that $P_n$ represents the orthogonal projection of $L^2(\CS_{\beta}, \tau)$ onto ${\mathcal W}_n,$ Mackey's Theorem gives the necessary conditions 
$$W_{(0,-n)} P_0 W_{(0,n)}\;=\;P_n,\;\forall n\in\mathbb Z,$$
and $$W_{(-{\bf q},0)}P_0W_{({\bf q},0)}\;=\; P_0,\;\forall {\bf q}\in \mathbb Q_A.$$
This is equivalent to the statement that the subspace ${\mathcal W}_0$ is invariant under the translations $T_{{\bf q}},\;\forall {\bf q}\in \mathbb Q_N,$ or, what is the same thing, the subspaces ${\mathcal W}_j$ are invariant under all integer translations. 

Since $W_{(0,-1)}^{-1} P_0 W_{(0,-1)}=P_1,$ and $\widehat{D}=W_{(0,-1)}^{-1},$
we obtain $\widehat{D}({\mathcal W}_0)\;=\;{\mathcal W}_1.$  It follows that if the representation of $BS_A$ above is induced from a representation U of the subgroup $\mathbb Q_A$ in such a way that there exists $g\in {\mathcal W}_0$ with $\{\widehat{T_v}(g): v\in\Z^d\}$ being an orthonormal basis for ${\mathcal W}_0,$  then $\{\widehat{D}^j\widehat{T_v}(g)\}$ will be an orthonormal basis for ${\mathcal W}_j,$ for each $j\in\mathbb Z.$ By the induced representation assumption, $L^2(\CS_{\beta}, \tau)\;=\;\oplus_{j\in \mathbb Z}{\mathcal W}_j.$ If $g$ exists, it would follow that $g$ would be an orthonormal wavelet in $L^2(\CS_{\beta}, \tau)$ for dilation by $\widehat{D}$ and translation by $\{\widehat{T_v}: v\in\mathbb Z^d.$  We will now show that if any wavelet of this type exists, it must be a generalized MSF wavelet; i.e., there will exist $E\subset \CS_{\beta}$ with $\{\sigma^j(E):\; j\in \mathbb Z\}$ pairwise disjoint up to sets of $\tau$-measure $0$ with $\tau(\CS_{\beta}\backslash \cup_{j\in\mathbb Z}\sigma^j(E))=0,$ and a function $\lambda:\mathbb T^d\to \mathbb C$ such that $\int_{\mathbb T^d}|\lambda(z)|^2 \nu_z(E_z)dz\;=\;1$ and $g((z_n)_{n=0}^{\infty})\;=\;\lambda(z_0)\chi_E((z_n)_{n=0}^{\infty}).$
 
\begin{theorem}
\label{thmmsfinduced}
Let $\psi\in L^2(X,\mu)$ be a single wavelet with  associated unitary dilation and translation operators $D$ and  $\{ T_v:\; \in\;\mathbb Z^d\}$, i.e. suppose that 
$\{D^jT_v(\psi):\;j\in\;\mathbb Z,\;v\;\in\;\mathbb Z^d\}$ is an orthonormal basis for $L^2(X,\mu).$  Let ${\mathcal F}:L^2(X,\mu)\to L^2(\CS_{\beta},\tau)$ be the generalized Fourier transform of Dutkay corresponding to a multiresolution analysis coming from a self-similar space $L^2(X,\mu).$  Then, the following are equivalent:
\begin{enumerate}
\item[i)] $\psi$ is a generalized MSF wavelet.
\item[ii)] The wavelet subspaces $W_j\;=\;\overline{\mbox{span}}\{D^jT_v(\psi)\}$ are the closed subspace corresponding to a system of imprimitivity $\{P_j:\;[j]\in BS_{A}/{\mathbb Q}_{A}\}.$ 
\end{enumerate}
\end{theorem}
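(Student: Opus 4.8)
The plan is to reduce the whole statement to Theorem~\ref{MSFsolenoid} together with an unwinding of what it means for the projections onto the wavelet subspaces to form a system of imprimitivity. First I would set $g=\mathcal{F}(\psi)$, so that $g$ is an orthonormal wavelet in $L^2(\CS_\beta,\tau)$ for $\widehat D$ and $\{\widehat{T_v}\}$. Because $\mathcal{F}$ intertwines $D,T_v$ with $\widehat D,\widehat{T_v}$, one has $\mathcal{W}_j=\mathcal{F}(W_j)$, where $\mathcal{W}_j=\overline{\text{span}}\{\widehat D^j\widehat{T_v}(g):v\in\Z^d\}$. Then condition (i), that $\psi=\mathcal{F}^{-1}(g)$ is a generalized MSF wavelet, is by Theorem~\ref{MSFsolenoid} exactly equivalent to the invariance of every $\mathcal{W}_j$ under all $\widehat{T_{\mathbf q}}$, $\mathbf q\in\mathbb Q_A$. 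Since $\mathcal{F}$ is unitary, this is the same as invariance of every $W_j$ under translation by $\mathbb Q_A$ in $L^2(X,\mu)$.

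Next I would analyze condition (ii). A system of imprimitivity for the wavelet representation $W$ based on $BS_A/\mathbb Q_A$ is a projection-valued measure $\{P_n\}$ on the quotient satisfying the covariance relation $W_x^{-1}P_E W_x=P_{x^{-1}\cdot E}$. Because $\mathbb Q_A$ is normal in $BS_A$ with $BS_A/\mathbb Q_A\cong\Z$, such a PVM is the same data as an orthogonal decomposition $L^2(\CS_\beta,\tau)=\oplus_{n\in\Z}\mathcal{W}_n$, and on the two families of generators the covariance relation splits into $W_{(0,-n)}P_0W_{(0,n)}=P_n$ and $W_{(-\mathbf q,0)}P_0W_{(\mathbf q,0)}=P_0$ for all $\mathbf q\in\mathbb Q_A$, exactly as recorded in the discussion preceding the statement. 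The decomposition itself is automatic here, since $\psi$ is assumed to be a single wavelet, so the $W_j$ (hence the $\mathcal{W}_j=\mathcal{F}(W_j)$) already decompose the Hilbert space orthogonally; the content of (ii) therefore lies entirely in the covariance relations.

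The crux is then the observation that one of the two covariance conditions is free and the other is precisely the MSF condition. The dilation relation $W_{(0,-n)}P_0W_{(0,n)}=P_n$ holds automatically, because $W_n=D^n(W_0)$ by the definition of the wavelet subspaces and $\widehat D=W_{(0,-1)}^{-1}$ implements dilation, so $\mathcal{W}_n=\widehat D^n(\mathcal{W}_0)$ with no further hypothesis. The translation relation $W_{(-\mathbf q,0)}P_0W_{(\mathbf q,0)}=P_0$ says precisely that $P_0$ commutes with every $\widehat{T_{\mathbf q}}$, i.e. that $\mathcal{W}_0$ is invariant under all $\widehat{T_{\mathbf q}}$, $\mathbf q\in\mathbb Q_A$. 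Using the commutation relation $\widehat{T_{\mathbf q}}\widehat D^n=\widehat D^n\widehat{T_{A^n\mathbf q}}$ (coming from $T_v D=D T_{Av}$) together with $A^n\mathbb Q_A=\mathbb Q_A$, invariance of $\mathcal{W}_0$ propagates to invariance of every $\mathcal{W}_n=\widehat D^n(\mathcal{W}_0)$, so the single condition on $P_0$ already yields the full system of imprimitivity. Hence (ii) holds if and only if each $\mathcal{W}_j$ is invariant under $\mathbb Q_A$-translation, which by the first paragraph is equivalent to (i), completing the equivalence.

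The step I expect to require the most care is the middle one: making rigorous the identification of a $BS_A$-system of imprimitivity based on $BS_A/\mathbb Q_A$ with the pair of covariance relations on the generators, and in particular confirming that the orthogonal decomposition supplied by the single-wavelet hypothesis is genuinely the projection-valued measure demanded by Mackey's imprimitivity theorem. Once that bookkeeping is in place, the separation into the automatic dilation covariance and the substantive translation covariance is routine, and the conclusion follows by quoting Theorem~\ref{MSFsolenoid}.
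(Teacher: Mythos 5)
Your proposal is correct, and for the direction (ii) $\Rightarrow$ (i) it is essentially the paper's own argument: extract from the imprimitivity relations that $\mathcal{W}_0$ is invariant under all $\widehat{T}_{\bf q}$, ${\bf q}\in\Q_A$, propagate this to every $\mathcal{W}_j$ using the commutation relation $\widehat{T}_{A^{-j}({\bf q})}\widehat{D}^j=\widehat{D}^j\widehat{T}_{\bf q}$ together with $A^{\pm 1}(\Q_A)=\Q_A$, and conclude via Theorem \ref{MSFsolenoid}. Where you genuinely differ is in (i) $\Rightarrow$ (ii). The paper does not argue abstractly there: it uses the MSF hypothesis to identify the wavelet subspaces concretely as $\mathcal{W}_j=L^2(\sigma^{-j}(E),\tau)$, takes $P_j$ to be the projection onto that space, and verifies the covariance relation $W({\bf q},k)P_j=P_{j-k}W({\bf q},k)$ by a direct computation with the explicit operator formulas (multiplication by the character $\langle{\bf q},\cdot\rangle$, the factor $\prod_{i=1}^k 1/m(z_i)$, composition with $\sigma^k$). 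You instead run Theorem \ref{MSFsolenoid} in this direction as well, obtaining $\Q_A$-invariance of every $\mathcal{W}_j$, and then get the covariance formally: $W({\bf q},k)P_jW({\bf q},k)^{-1}=\widehat{T}_{\bf q}\bigl(\widehat{D}^kP_j\widehat{D}^{-k}\bigr)\widehat{T}_{\bf q}^{-1}=\widehat{T}_{\bf q}P_{j'}\widehat{T}_{\bf q}^{-1}=P_{j'}$, where $j'$ is the shifted index and the last equality is exactly the invariance. This is shorter, makes the two directions symmetric, and avoids the filter computation entirely; what the paper's concrete computation buys in exchange is an explicit description of the imprimitivity system in terms of the wavelet set $E$, which is what feeds into the subsequent corollary identifying $W$ with a direct integral of representations induced from $\Q_A$. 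The bookkeeping point you flagged — that checking covariance only on the two generator families and only at $P_0$ suffices — is closed by your own observations: the orthogonal decomposition is automatic from the single-wavelet hypothesis, the dilation relation identifies each $P_n$ with the conjugate $\widehat{D}^nP_0\widehat{D}^{-n}$ of $P_0$, and since $\Q_A$ is precisely the stabilizer of the base coset in $BS_A/\Q_A\cong\Z$, covariance under $\Q_A$ at $P_0$ is the only remaining condition, which is your translation relation.
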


\begin{proof}
Assume that condition i) holds.  
We know that the wavelet representation generated by $D$ and $\{T_v:\;v\in\mathbb Z^d\}$ on $L^2(X,\mu)$ is unitarily equivalent to the representation on $L^2(\CS_{\beta},\tau)$ generated by 
$\widehat{D}$ and $\{\widehat{T_v}:\; v\in\;\mathbb Z^d\}.$  As in the beginning of this section, we denote this latter representation by $W.$
Theorem \ref{MSFsolenoid}, condition i) implies that for each $j\in\mathbb Z,\;{\mathcal W_j}$ is  invariant under the operators $\{\widehat{T_v}:\;v\in\mathbb Z^d\}.$  
From the discussion given prior to the statement of the Theorem, in order that the wavelet representation $W:BS_A\;\to\;{\mathcal U}(L^2(\CS_{\beta},\tau))$ be induced from a representation of $\mathbb Q_A,$ we need to construct a projection- valued measure $\{P_n:\;n\in\;\mathbb Z\}$ onto $L^2(\CS_{\beta},\tau),$ such that for every $({\bf q},n)\in \mathbb Q_A\rtimes_{\theta}\mathbb Z$ and every $S\subset \mathbb Z\equiv \mathbb Q_A\rtimes_{\theta}\mathbb Z/ \mathbb Z,$
\begin{equation}
\label{imprim}
W_{({\bf q},-k)}P_0(f)\;=\;P_kW_{({\bf q},-k)}f,\;\forall k\in\;\mathbb Z,\;{\bf q}\in\mathbb Q_A.
\end{equation}

By condition i), ${\mathcal F}(\psi)=\lambda\cdot \chi_E,$ where $E\subset \CS_{\beta}$ is a Borel set such that the sets $\{\sigma^j(E):j\in\mathbb Z\}$ pairwise disjoint up to sets of $\tau$-measure $0$ and $\tau(\CS_{\beta}\backslash [\cup_{j\in \mathbb Z}\sigma^j(E)])=0.$    Setting ${\mathcal W}_j\;=\;L^2(\sigma^{-j}(E),\tau),$ we claim that defining $P_j\;=\;P_{{\mathcal W}_j},$ we obtain a projection-valued measure on $\mathbb Z$ satisfying the desired imprimitivity conditions.   First we note that ${\mathcal W}_j\perp {\mathcal W}_k$ for $j\not=k,$ so that the projections $\{P_j:\;j\in\mathbb Z\}$ are mutually orthogonal. Secondly, it's clear that $\oplus_{j\in\mathbb Z}L^2(\sigma^j(E),\tau)\;=\;L^2(\CS_{\beta},\tau).$  Therefore our definition provides a projection-valued measure from $\mathbb Z$ into projections on $L^2(\CS_{\beta},\tau).$   We now show that this projection-valued measure satisfies the requirements stated in Equation \ref{imprim}.

To verify Equation \ref{imprim}, it is enough to show that 
$$W({\bf q},k)P_{j}\;=\;P_{{\mathcal W}_{j-k}}W({\bf q},k),\;\forall ({\bf q},k)\in \mathbb Q_A\rtimes_{\theta}\mathbb Z,\;\forall j \in \mathbb Z.$$

Recall that given unitary operators $\widehat{D}$ and $\widehat{T_{{\bf q}}},\;{\bf q}\in \mathbb Q_A$ on $L^2(\CS_{\beta},\tau),$ as defined above,
$W({\bf q},n)$ is defined by 
$$W({\bf q},n);=\;\widehat{T_{{\bf q}}}\cdot \widehat{D}^{n}.$$
we obtain the wavelet representation of the Baumslag-Solitar group $BS_A.$

We calculate: 
$$W({\bf q},k)P_jf((z_n))\;=\;\langle {\bf q} (z_n)\rangle \cdot \widehat{D}^{k}f((z_n))\cdot \chi_{\sigma^{j}(E)}$$
$$=\;\langle {\bf q}, (z_n)\rangle \cdot \prod_{i=1}^k[\frac{1}{m(z_i)}]\cdot f\circ \sigma^k( (z_n)_{n=0}^{\infty})\chi_{\sigma^{j-k}(E)}.$$
On the other hand, 
$$P_{j-k}W({\bf q},k)f((z_n))\;=\;\chi_{\sigma^{j-k}(E)}\cdot \widehat{T_{{\bf q}}}\widehat{D}^{k}f((z_n))$$
$$=\;\langle {\bf q}, (z_n)\rangle \cdot \chi_{\sigma^{j-k}(E)} \cdot \prod_{i=1}^k[\frac{1}{m(z_i)}]\cdot f\circ \sigma^k( (z_n)_{n=0}^{\infty}).$$
We have established the equality 
$$W({\bf q},k)P_j\;=\;P_{j-k}W({\bf q},k)$$
and it follows that Equation \ref{imprim} is satisfied, so that $W$ is a representation that is induced from a representation on $\mathbb Q_A,$ with imprimitivity structure provided by the pairwise orthogonal wavelet subspaces. We thus have established condition (ii).

Assume now that condition (ii) holds.  Then Equation \ref{imprim} is satisfied with respect to the wavelet subspaces 
$\{\mathcal W_j:\; j\in\mathbb Z\}.$  In particular, Equation \ref{imprim} is satisfied with respect to $W_0=\overline{\text{span}}\{\widehat{T}^k({\mathcal F}(\psi):\; k\in\;\mathbb Z\}.$ so that $W({\bf q},0)P_0\;=\;P_0W({\bf q},0),\;\forall\;{\bf q}\;\in\;\mathbb Q_A,$ or, what is the same thing,
$W_0$ is invariant under the translation operators $T_{{\bf q}},\;\forall\;{\bf q}\;\in\;\mathbb Q_A.$  
 Note that
$$\widehat{D}^j\widehat{T_{\bf q}}{\widehat D}^{-j}=\widehat{T_{A^{-j}({\bf q})}},\;\forall\; j\in\;\mathbb Z,\;\text{and all}\;{\bf q}\in\mathbb Q_A,$$
so that $$\widehat{T_{A^{-j}({\bf q})}}\widehat{D}^j\;=\;\widehat{D}^j\widehat{T_{\bf q}},\;\forall\; j\in\;\mathbb Z,\;\text{and all}\;{\bf q}\in\mathbb Q_A.$$
Since ${\mathcal W}_j=\widehat{D}^j({\mathcal W}_0,)$  for any $h\in {\mathcal W}_j,$ we can find $f\in {\mathcal W}_0$ with $h\;=\;\widehat{D}^j(f).$  Then if ${\bf r}\in\mathbb Q_A,$
$$\widehat{T_{\bf r}}(h)\;=\;\widehat{T_{\bf r}}\widehat{D}^j(f)=\widehat{D}^j\widehat{T_{A^j({\bf r})}}(f).$$
Since we have shown $W_0$ is invariant under the translation operators $T_{\beta},\;\forall\;\beta\;\in\;\mathbb Q_N$, $\widehat{T_{A^j({\bf r})}}(f)\in\;{\mathcal W}_0.$  But then $\widehat{D}^j\widehat{T_{A^j({\bf r})}}(f)\in \;\widehat{D}^j({\mathcal W}_0)\;=\;{\mathcal W}_j.$
It follows that for each fixed $j\in\mathbb Z,\;{\mathcal W}_j$ is invariant under $\widehat{T_{\bf r}},\;\forall\; {\bf r}\in \mathbb Q_A.$ 
\end{proof}

Now suppose that either of the equivalent conditions of Theorem \ref{thmmsfinduced} are satisfied, and $E$ is the subset of  $(\CS_{\beta},\tau)$ serving as the candidate for a ``wavelet set".    For each $(z_n)\in E,$ by Pontryagin duality $(z_n)$ is a character on $\mathbb Q_A$ so that the pairing $\langle {\bf q}, (z_n)\rangle$ defines a one-dimensional unitary representation of $\mathbb Q_A.$  If we take the direct integral of these representations of $\mathbb Q_A,$ we obtain the direct integral representation 
$\int^{\oplus}_E (z_n) d\tau$ of $\mathbb Q_A$ on $L^2(E,\tau),$ given by the obvious formula 
$$L({\bf q}) f((z_n)_{n=0}^{\infty})\;=\;\langle {\bf q}, (z_n)\rangle f((z_n)_{n=0}^{\infty}).$$
Recall that the process of inducing and taking direct integrals commutes, so that 
$$\text{Ind}_{\mathbb Q_A}^{BS_A}( \int^{\oplus}_E (z_n) d\tau)\;=\;\int^{\oplus}_E[\text{Ind}_{\mathbb Q_A}^{BS_A}(z_n)_{n=0}^{\infty}] d\tau.$$  

A computation similar to that given in Theorem 2.1 of \cite{lpt} gives the following 

\begin{corollary} Let $g$ be an orthonormal wavelet in $L^2(\CS_{\beta},\tau)$ corresponding to dilation by $\widehat{D}$ and ``translation" by the operators $\{\widehat{T_v}:v\in\mathbb Z^d\},$ and suppose that $g$ corresponds to a generalized MSF wavelet, so that $g=\lambda\cdot \chi_E$ where the sets $\{\sigma^k(E): k\in\mathbb Z\}$ tile $\CS_{\beta}$ with respect to the measure $\tau.$ Then the wavelet representation $W$ of $BS_A$ on $L^2(\CS_{\beta},\tau)$ is unitarily equivalent the direct integral of induced representations 
$\int^{\oplus}_E[\text{Ind}_{\mathbb Q_A}^{BS_A}(z_n)_{n=0}^{\infty}] d\tau.$
\end{corollary}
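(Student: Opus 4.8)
The plan is to read off the statement from the system of imprimitivity already produced in Theorem \ref{thmmsfinduced}, apply Mackey's Imprimitivity Theorem to realize $W$ as an induced representation, identify the inducing representation explicitly as the multiplication representation $L=\int^\oplus_E(z_n)\,d\tau$, and finally invoke the commutation of induction with direct integrals recorded just before the corollary. First I would note that since $g=\lambda\cdot\chi_E$ is a generalized MSF wavelet, Theorem \ref{thmmsfinduced} supplies the wavelet subspaces ${\mathcal W}_j$ together with projections $\{P_j:j\in\mathbb Z\}$ forming a projection-valued measure on $BS_A/\mathbb{Q}_A\cong\mathbb Z$, with identity fiber ${\mathcal W}_0=P_0L^2(\CS_\beta,\tau)=L^2(E,\tau)$ and satisfying the imprimitivity relation
$$W({\bf q},k)\,P_j\;=\;P_{j-k}\,W({\bf q},k),\qquad ({\bf q},k)\in BS_A,\ j\in\mathbb Z.$$
Because $BS_A$ is discrete with closed subgroup $\mathbb{Q}_A$ and quotient $\mathbb Z$, Mackey's Imprimitivity Theorem then guarantees that $W$ is unitarily equivalent to the representation of $BS_A$ induced from the representation of $\mathbb{Q}_A$ carried by the identity-coset fiber ${\mathcal W}_0$.

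Next I would identify that inducing representation. Setting $k=0$ and $j=0$ in the imprimitivity relation gives $W({\bf q},0)P_0=P_0W({\bf q},0)$, so ${\mathcal W}_0=L^2(E,\tau)$ is invariant under $\widehat{T_{{\bf q}}}$ for every ${\bf q}\in\mathbb{Q}_A$, and the restricted action is the inducing representation. For ${\bf q}=A^{-j}(v)$ with $v\in\Z^d$, the operator acts by
$$\widehat{T_{{\bf q}}}(f)((z_n)_{n=0}^{\infty})\;=\;\langle {\bf q},(z_n)_{n=0}^{\infty}\rangle\, f((z_n)_{n=0}^{\infty})\;=\;(z_j)^v f((z_n)_{n=0}^{\infty}),$$
which is exactly the multiplication representation $L=\int^\oplus_E(z_n)\,d\tau$ of $\mathbb{Q}_A$ on $L^2(E,\tau)$ described before the corollary. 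Hence the inducing representation is $L$, so that $W\cong\text{Ind}_{\mathbb{Q}_A}^{BS_A}(L)$. Applying the already-recorded identity that induction commutes with direct integrals, namely
$$\text{Ind}_{\mathbb{Q}_A}^{BS_A}\Big(\int^{\oplus}_E (z_n)\,d\tau\Big)\;=\;\int^{\oplus}_E\big[\text{Ind}_{\mathbb{Q}_A}^{BS_A}(z_n)_{n=0}^{\infty}\big]\,d\tau,$$
we conclude that $W\cong\int^{\oplus}_E[\text{Ind}_{\mathbb{Q}_A}^{BS_A}(z_n)_{n=0}^{\infty}]\,d\tau$, as claimed.

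I expect the main obstacle to be bookkeeping rather than anything conceptual. The substantive structural inputs — the system of imprimitivity (Theorem \ref{thmmsfinduced}), the identification ${\mathcal W}_0=L^2(E,\tau)$ (Theorem \ref{MSFsolenoid}), and the commutation of induction with direct integrals — are all in hand, so the only care needed is to match the identity coset of $BS_A/\mathbb{Q}_A\cong\mathbb Z$ to the correct fiber ${\mathcal W}_0$ and to verify that $W\vert_{\mathbb{Q}_A}$ restricted to that fiber coincides with $L$ on all of $\mathbb{Q}_A$ (that is, for every ${\bf q}=A^{-j}(v)$, via the pairing $\langle{\bf q},(z_n)\rangle$, and not merely for ${\bf q}\in\Z^d$). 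The comparison to Theorem 2.1 of \cite{lpt} indicates that once the system of imprimitivity and its identity fiber are pinned down, the induced-representation computation follows along standard lines.
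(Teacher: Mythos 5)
Your proof is correct, but it is organized differently from the paper's. The paper's own proof (deliberately terse) follows Theorem 2.1 of the Lim--Packer--Taylor paper \cite{lpt}: it constructs an explicit Hilbert space isomorphism between $L^2(\CS_{\beta},\tau)$ and $L^2(E\times \mathbb Z,\tau\times\text{count})$ and checks directly that this unitary intertwines the wavelet representation $W$ with the direct integral of induced representations; the details of that bookkeeping are left to the reader. You instead argue abstractly: you take the system of imprimitivity $\{P_j\}$ already produced in Theorem \ref{thmmsfinduced}, invoke Mackey's Imprimitivity Theorem (in its converse direction, for the discrete quotient $BS_A/\mathbb Q_A\cong\mathbb Z$) to conclude $W\cong\text{Ind}_{\mathbb Q_A}^{BS_A}(U)$ with $U$ the restriction of $W$ to $\mathbb Q_A$ acting on the identity-coset fiber ${\mathcal W}_0=L^2(E,\tau)$, identify $U$ with the multiplication representation $L=\int^{\oplus}_E(z_n)\,d\tau$, and finish with the commutation of induction and direct integrals, exactly as the paper's preamble sets up. Both routes are legitimate. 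Yours is shorter and leverages machinery already proved, but it quietly relies on two facts that deserve acknowledgment: that for a discrete homogeneous space the inducing representation in Mackey's theorem may indeed be taken to be $W\vert_{\mathbb Q_A}$ on the identity fiber (standard, proved by choosing coset representatives $(0,n)$ and using $W(0,n)$ to map ${\mathcal W}_0$ onto ${\mathcal W}_{\pm n}$ --- which is precisely the explicit unitary the paper builds), and that induction commutes with direct integrals in this measurable setting (asserted, not proved, in the paper). The paper's route buys a concrete intertwining operator and avoids citing the imprimitivity theorem as a black box; your route buys brevity and makes transparent that the corollary is a formal consequence of Theorem \ref{thmmsfinduced}. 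In effect, fully expanding your appeal to Mackey for this discrete quotient reproduces the paper's construction, so the two proofs converge at the level of detail, but as written yours is a genuinely different and acceptable organization of the argument.
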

\begin{proof}
The proof is very similar to that given in Theorem 2.1 of \cite{lpt}.  The most technical part is to construct a Hilbert space isomorphism between  $L^2(\CS_{\beta},\tau)$ and $L^2(E\times \mathbb Z, \tau\times \text{count})$ that intertwines the wavelet representation $W$ and the induced representation in the desired fashion.  We leave details to the reader.
\end{proof}

\begin{remark}
{\rm
Thus, a single wavelet $\psi\in L^2(X,\mu)$ can be used to construct orthogonal subspaces that regulate the induction of a wavelet representation from a representation of the normal subgroup $\mathbb Q_N$ if and only if $\psi$ is a generalized MSF wavelet. Moreover, the induced representation involved will be a direct integral of monomial representations, as in \cite{lpt}
}
\end{remark}




\begin{thebibliography}{99}

\bibitem{bcm} 
L.W. Baggett, J.E. Courter and K.D. Merrill, The construction of wavelets from generalized conjugate mirror filters in $L^2(\mathbb R^n)$, \emph{Appl. Comput. Harmonic Anal.} \textbf {13} (2002), 201--223.
\bibitem{bjmp}
L.W. Baggett, P.E.T. Jorgensen, K.D. Merrill and J.A. Packer, Construction of Parseval wavelets from redundant filter systems, \emph{J. Math. Phys.} \textbf{46} (2005), \#083502, 1--28.
\bibitem{ijkln}
L.W. Baggett, N.S. Larsen, K.D. Merrill, J.A. Packer and I. Raeburn, Generalized multiresolution analyses with given multiplicity functions, \emph{J. Fourier Anal. Appl.}, published online in May 2008; arXiv.math:0710.2071.
\bibitem{aijkln}
L.W. Baggett, N.S. Larsen, J.A. Packer, I. Raeburn and A. Ramsay   , Direct limits, multiresolution analyses, and wavelets, \emph{preprint}.
\bibitem{bow} M. Bownik, The construction of $r$-regular wavelets for arbitrary dilations, \emph{J. Fourier Anal. Appl.} \textbf{7} (2001), 489--506.
\bibitem{bj}
O. Bratteli and P.E.T. Jorgensen, Isometries, shifts, Cuntz algebras and multiresolution analyses of scale $N$, \emph{Integral Equations \& Operator Theory} \textbf{28} (1997), 382--443.
\bibitem{brenk} B. Brenken, The local product structure of expansive automorphisms of solenoids and their associated $C^{\ast}$-algebras. (English summary) 
\emph{Canad. J. Math.} \textbf{48} (1996), 692--709.
\bibitem{dmp}
J. D'Andrea, K.D. Merrill and J.A. Packer, Fractal wavelets of Dutkay-Jorgensen type for the Sierpinski gasket space, in \emph{Frames and Operator Theory in Analysis and Signal Processing, Contemp. Math.}, vol. 451, Amer. Math. Soc., Providence, 2008, pages~69--88.
\bibitem{dut} D.E. Dutkay, Low-pass filters and representations of the Baumslag-Solitar group,  \emph{Trans. Amer. Math. Soc.}  \textbf{358}  (2006), 5271--5291. 
\bibitem{dutjor42}
D.E. Dutkay and P.E.T. Jorgensen, Wavelets on fractals, \emph{Revista Mat. Iberoamericana} \textbf{22}  (2006),  131--180.
\bibitem{dutjorJMP}
D.E. Dutkay and P.E.T. Jorgensen, Hilbert spaces built on a similarity and on dynamical renormalization, \emph{J. Math. Physics} \textbf{47} (2006), 20 pp.
\bibitem{dutjor}
D.E. Dutkay and P.E.T. Jorgensen, Martingales, endomorphisms, and covariant systems of operators in Hilbert space, \emph{J. Operator Theory} \textbf{58} (2007), 269--310.
\bibitem{dls}
D.E. Dutkay, D.R. Larson, and S. Silvestrov, ArXiv paper, \emph{To Appear}, 2010.
\bibitem{ef}
E.G. Effros, Global Structure in Von Neumann algebras,
\emph{TAMS} \textbf{121} (1966), 434-454.
\bibitem{gopbur} 
R.A. Gopinath and C.S. Burrus, Wavelet transforms and filter banks, in \emph{Wavelets:  A Tutorial in Theory and Applications} (C.K. Chui, ed.), Academic Press, Inc., San Diego, 1992,
pages~603--654.
\bibitem{hutch} J.~E.~Hutchinson, \textit{Fractals and self-similarity}, Indiana Univ. Math. J., \textbf{30} (1981), 713-747.
\bibitem{Jor1} P.E.T. Jorgensen, \emph{Ruelle Operators: Functions which Are Harmonic with Respect to a Transfer Operator}, Mem. Amer. Math. Soc. \textbf{152} (2001), no. 720. 
\bibitem{Jor2} P.E.T. Jorgensen, \emph{Analysis and Probability: Wavelets, Signals and Fractals}, Springer Graduate Texts in Mathematics, vol. 234, Springer-Verlag, New York, 2006.
\bibitem{gpots} N.S. Larsen and I. Raeburn, From filters to wavelets via direct limits, in \emph{Operator Theory, Operator Algebras and Applications}, \emph{Contemp. Math.}, vol. 414, Amer. Math. Soc., Providence, 2006, pages~35--40.
\bibitem{lpt} L.-H. Lim, J.A. Packer, and K.F. Taylor, A direct integral decomposition of the wavelet representation, \emph{Proc. Amer. Math. Soc.} \textbf{129} (2001), 3057--3067.
\bibitem{mack} G.W. Mackey, Induced representations of locally compact groups, II; The Frobenius reciprocity theorem, \emph{Ann. of Math.} 2 \textbf{58} (1953), 193--221.
\bibitem{mal} S.G. Mallat, Multiresolution approximations and wavelet orthonormal bases of $L^2(\mathbb R)$, \emph{Trans. Amer. Math. Soc.} \textbf{315} (1989), 69--87.
\bibitem{parth} K.R. Parthasarathy, \emph{Introduction to Probability and Measure}, Macmillan, Delhi, 1977; Springer-Verlag, New York, 1978.
\bibitem{pet} K. Petersen, \emph{Ergodic Theory}, Cambridge Univ. Press, Cambridge, 1983.
\bibitem{str} R.S. Strichartz, Construction of orthonormal wavelets, in \emph{Wavelets: Mathematics and Applications} (J.J. Benedetto and M.W. Frazier, eds.), CRC press, Boca Raton, 1994, pages~23--50.
\bibitem{web} E. Weber, On the Translation Invariance of Wavelet Subspaces, \emph{unpublished manuscript}, 1999.


\end{thebibliography}
\end{document}